\documentclass[preprint,12pt]{elsarticle}
\makeatletter 
\def\ps@pprintTitle{%
	\let\@oddhead\@empty
	\let\@evenhead\@empty
	\let\@oddfoot\@empty
	\let\@evenfoot\@oddfoot
}
\makeatother

\usepackage{bbm}
\usepackage{amsfonts}
\usepackage{amsmath}
\usepackage{amsthm}
\usepackage{datetime}
\usepackage{enumerate}
\usepackage{xcolor}
\usepackage{graphicx}
\usepackage{epstopdf}
\usepackage{natbib}
\setcitestyle{authoryear}
%


\newtheorem{thm}{Theorem}[section]
\newtheorem{lemma}{Lemma}[section]
\newtheorem{corollary}{Corollary}[section]
\newtheorem{prop}{Proposition}[section]
\newtheorem{remark}{Remark}[section]
\newtheorem{example}{Example}[section]


\newcommand{\dsum}{\displaystyle\sum}
\newcommand{\dint}{\displaystyle\int}

\newcommand{\cid}{\stackrel{d}{\longrightarrow}}
\newcommand{\cip}{\stackrel{P}{\longrightarrow}}
\newcommand{\cas}{\stackrel{a.s.}{\longrightarrow}}

\newcommand{\toi}{\to\infty}
\newcommand{\eind}{\stackrel{d}{=}}

    \newcommand{\1}{\mathbb{I}}
    
    \newcommand{\RR}{\mathbb{R}}
    
    \newcommand{\EE}{\mathbb{E}}
    \newcommand{\PP}{\mathbb{P}}

    \newcommand{\Exp}{\mathbb{E}}
    \newcommand{\Var}{\operatorname{Var}}
    
    \renewcommand{\Pr}{\mathbb{P}}
    \newcommand{\vep}{\varepsilon}
    \newcommand{\taui}{\Gamma_i}
    \newcommand{\taun}{\Gamma_n}

\begin{document}

\title{On total claim amount for marked Poisson cluster models} 

\author[a]{Bojan Basrak}
\ead{bbasrak@math.hr}
\address[a]{Department of Mathematics, University of Zagreb, Bijeni\v cka 30, Zagreb, Croatia}

\author[b]{Olivier Wintenberger}
\ead{olivier.wintenberger@upmc.fr}
\address[b]{LPSM, Sorbonne university, Jussieu, F-75005, Paris, France}

\author[c]{Petra \v Zugec\corref{cor1}}
\ead{petra.zugec@foi.hr}
\address[c]{Faculty of Organization and Informatics, Pavlinska 2, Vara\v zdin, University of Zagreb, Croatia}
\cortext[cor1]{Corresponding author}

\begin{abstract}
We study the asymptotic distribution of the total claim amount for marked Poisson cluster models. The marks determine the size and other characteristics of the individual claims and potentially influence arrival rate of the future claims. We find sufficient conditions under which the total  claim amount satisfies the central limit theorem or alternatively tends in distribution to an infinite variance stable random variable. We discuss several Poisson cluster models in detail, paying special attention to the  marked Hawkes processes as our key example.
\end{abstract}

\begin{keyword}
Poisson cluster processes \sep limit theorems \sep Hawkes process \sep total claim amount \sep central limit theorem \sep stable random variables
	\MSC[2010]{91B30, 60F05, 60G55}
\end{keyword}

\maketitle

\section{Introduction} 
Elegant mathematical analysis of the  classical Cram\'er--Lundberg risk model has a prominent place in nonlife insurance theory.
The theory yields precise or approximate computations of the ruin probabilities, appropriate reserves, distribution of the total claim amount and other properties of an idealized insurance portfolio,
see for instance \cite{AsmuAlbrecher} or \cite{mikosch}. In recent years, some special models have been proposed to account for  the possibility of clustering of insurance events. For instance,  in the context of Hawkes processes, some results on ruin probabilities can be found in \cite{stabile} and \cite{zhuIME2013}. General cluster point processes and Poisson cluster processes in particular, have been proved useful in a variety of fields when modelling events that cluster either in space or time. This includes seismology, telecommunications, forensic science, molecular biology or finance, we refer to section 6.4 in \cite{daley} for some examples.

The main goal of this article is to study asymptotic distribution of the total claim amount in the setting where Cram\'er--Lundberg risk model is augmented with a  Poisson cluster structure. To make this more precise,  we model arrival of claims in an insurance portfolio by a marked point process, say
$$
N = \dsum_{k=1}^\infty \delta_{\tau_k,A^k} \,,
$$
where $\tau_k$'s are nonnegative random variables representing arrival times with  some degree of clustering and $A^k$'s represent corresponding marks in a rather general metric space $\mathbb{S}$. Observe that we do allow for the possibility that marks influence arrival rate of the future claims. In the language of point processes theory,  we assume that the marks are merely unpredictable and not independent of the arrival times \citep{daley}. For each marked event, the claim size can be calculated using a measurable mapping of marks to nonnegative real numbers, $f(A^k)$ say. So that the total claim amount in the time interval $[0,t]$ can be calculated as  
\[
S(t) = \dsum_{\tau_k \leq t} f (A^k) = \dint_{[0,t]\times \mathbb{S}} f(a) N(ds, da)\,.
\]

In the sequel, we aim to determine the effect of the clustering  on the quantity $S(t)$, as $t\toi$ even in the case when the distribution of the individual claims does not satisfy assumptions of the classical central limit theorem. The paper is organized as follows --- in the following section we rigorously introduce marked Poisson cluster model and present some  specific cluster modes which have attracted attention in the related literature, see \cite{fayetal, stabile,zhu}. As a proposition in Section~\ref{sec:CLT} we present the central limit theorem for the total claim amount $S(t)$ in our setting under appropriate second moment conditions.  In Section \ref{sec:IVSL}, we prove a functional limit theorem  concerning the sums of regularly varying nonnegative random variables when subordinated to an independent renewal process. Based on this, we prove the limit theorem for the total claim amount $S(t)$ in cases when individual claims have infinite variance. Finally in Section~\ref{sec:APP} we apply our results to the models we introduced in Section~\ref{sec:GMEM}. In particular, we give a detailed analysis of the asymptotic behaviour of $S(t)$ for marked Hawkes processes which have been extensively studied in recent years.

\section{The general marked Poisson cluster model}\label{sec:GMEM}

  Consider an independently marked homogeneous Poisson point process with mean measure $(\nu$Leb) on the state space $[0,\infty)$ for some constant $\nu >0,$ where Leb denotes Lebesgue measure on  $[0,\infty),$  with marks in a completely metrizable  separable space
 $\mathbb{S}$, 
$$
N^0 = \dsum_{i\geq 1} \delta_{ \taui,A_i}\,.
$$
Marks $A_i$ are assumed to follow a common distribution $Q$ on a measurable
space $(\mathbb{S}, \mathcal{S})$
where $\mathcal{S}$ denotes a corresponding
Borel $\sigma$--algebra. 
In other words, $N^0$ is  a Poisson point process with intensity $\nu \times Q$ on the space
$[0,\infty) \times \mathbb{S}$\,.
For non--life insurance modelling purposes, the marks can take values in  $\RR^d$ with coordinates representing the size
of claim, type of claim, severity of accident, etc.

 Denote the space of locally finite point measures
on this space by $M_p= M_p ( [0,\infty) \times \mathbb{S})$ and assume 
that at each time $\taui$ with mark $A_i$ another
point process in $M_p $ is generated independently, we denote it by
$G^{A_i}$. Intuitively, point process $G^{A_i}$ represents a cluster of points that {is superimposed} on $N^0$ after time $ \taui$. Formally, there exists a probability kernel $K$, from $\mathbb{S}$ to $M_p$,
such that, conditionally on $N^0$, point processes $G^{A_i}$ are independent, a.s. finite and with the distribution equal to  $K(A_i,\cdot),$ thus the dependence between the $G^{A_i}$ and $A_i$ is permitted. Based on $N^0$ and clusters $G^{A_i}$ we define a cluster Poisson process.

In order to keep the track of the cluster structure, we can alternatively consider the process $G^{A_i}$ as a part of the mark attached to $N^0$ at time $ \taui$. Indeed,
$$
 \dsum_{i\geq 1} \delta_{ \taui,A_i,G^{A_i}}\,
$$
can be viewed as a marked Poisson process on  $[0,\infty)$ with marks in the space $  \mathbb{S} \times M_p$. We can write

$$
 G^{A_i} = \dsum_{j= 1}^{K_i}  \delta_{T_{ij},A_{ij}}\,,
$$ 
where ${(T_{ij})}_{j\geq 1}$ is a sequence of nonnegative random variables and for some $\mathbb{N}_0 $ valued random variable $K_i$. If we count the original   point arriving at time $\taui$, the actual cluster size is
$K_i+1$. Further, for any original arrival point {$\taui$} and corresponding
random cluster $G^{A_i}$, we introduce a  point process
$$
 C_i = \delta_{0,A_i} + G^{A_i}\,.
$$ Note that $K_i$ may possibly depend on $A_i$, but we do assume throughout
that 
\begin{equation} \label{Kfin}
\Exp K_i <\infty\,.
\end{equation}
Finally,  to describe 
 the size and other characteristics of the claims together with their  arrival times, we
use a marked point process   $N$ as a random element in $M_p$ of the
form
\begin{equation}\label{e:PoisProc}
N= \dsum_{i=1}^\infty  \dsum_{j = 0}^{K_i} \delta_{ \taui+T_{ij},A_{ij}}\,,
\end{equation}
where we set $T_{i0} = 0$ and $A_{i0} = A_i$. In this representation, the claims arriving at time $ \taui$ and corresponding to the index $j=0$ are called ancestral or  immigrant claims, while the claims arriving at times $ \taui+T_{ij},\ j \geq 1$, are referred to as progeny or offspring. Moreover, since $N$ is locally finite, one could also write
$$
N = \dsum_{k=1}^\infty \delta_{\tau_k,A^k} \,,
$$
with  $\tau_k\leq \tau_{k+1}$ for all $k \geq  1$. Note that in this representation we ignore the information regarding the clusters of the point process. Clearly, if the cluster processes $G^{A_i}$ are independently marked with the same mark distribution $Q$ independent of $A_i$, then all the marks $A^k$ are i.i.d. 

The size of claims
is  produced by an application of a measurable function, say 
 $f:\mathbb{S} \to \mathbb{R}_+$, on the marks. In particular,
 sum of all the claims due to the arrival of an immigrant claim at time $ \taui$
 equals
\begin{equation} \label{e:Di}
  D_i =\dint_{[0,\infty)\times \mathbb{S}} f(a) C_i(dt, da)\,,
\end{equation}
while 
  the total claim size in
the period $[0,t]$ can be calculated as 
$$
 S(t) = \dsum_{\tau_k \leq t} f (A^k) = \dint_{[0,t]\times \mathbb{S}} f(a) N(ds, da)\,.
$$

\begin{remark}
	In all our considerations, we take into account (without any real loss of generality)
	the original immigrant claims arriving at times $\taui$ as well. In principle,
	one could ignore these claims and treat  $\taui$  as times of incidents that trigger,
	with a possible delay, a cluster of subsequent payments. Such a  choice seems particularly  useful if one aims to model the so called incurred but not reported (IBNR) claims, when estimating appropriate reserves in an insurance portfolio \citep{mikosch}. In such a case, in the definition of the process $N$, one would omit the points of the original Poisson process $N^0$ and consider
$$
N= \dsum_{i=1}^\infty  \dsum_{j = 1}^{K_i} \delta_{ \taui+T_{ij},A_{ij}}\,,
$$	
instead.
\end{remark}
\subsection{Some special models} \label{ss:Mod}

Several examples of Poisson cluster processes have been studied in the monograph of \cite{daley}, see Example 6.3 therein for instance. Here we study  marked adaptation of the first three examples  6.3 (a)-(b) and (c) of \cite{daley}.

\subsubsection{Mixed binomial Poisson cluster process}\label{ss:modbin}

Assume that the clusters  have the following form
$$
  G^{A_i} = \dsum_{j=1}^{K_i} \delta_{W_{ij}, A_{ij}}\,,
$$
with $(K_i,(W_{ij})_{j\ge 1},(A_{ij})_{j\ge 0})_{i\ge 0}$ being an  i.i.d. sequence. Assume moreover that $(A_{ij})_{j\ge 0}$ are i.i.d. for any fixed $i=1,2,\ldots $ and that $(A_{ij})_{j\ge1}$ is independent of $K_i,(W_{ij})_{j\ge 1}$ for all $i\ge 0$. We allow for possible dependence between $K_i,(W_{ij})_{j\ge 1}$ and the ancestral mark $A_{i0}$, however, we  assume that $K_i$ and $(W_{ij})_{j\ge 1}$ are conditionally  independent given $A_{i0}$. As before we assume $\EE[K]<\infty$. Observe that we use notation $W_{ij}$ instead of $T_{ij}$ to emphasize relatively simple structure of clusters in this model in contrast with two other models in this section. Such a process $N$ is a version of the so--called Neyman--Scott  process, e.g. see {Example 6.3 (a) of \cite{daley}}.

\subsubsection{Renewal  Poisson cluster process}\label{ss:modren}

Assume next that the clusters  
$G^{A_i}$ 
have the following  distribution
$$
  G^{A_i} = \dsum_{j=1}^{K_i} \delta_{T_{ij}, A_{ij}}\,,
$$
where 
$(T_{ij})_j$ 
represents a renewal sequence
$$
T_{ij} = W_{i1} + \cdots +  W_{ij}\,,
$$
and we keep all the other assumptions from the model in subsection \ref{ss:modbin} (in particular,  $(W_{ij})_{j\ge 1}$ are conditionally i.i.d. and independent of $K_i$ given $A_{i0}$). 
 A general unmarked model of this type is called Bartlett–-Lewis
 model and analysed in \cite{daley}, see Example 6.3 (b).
 See also \cite{fayetal} for an application of such a point process to modelling of teletraffic data.

	 These  two simple cluster models  were already considered by \cite{mikosch} in the context of   insurance applications. In particular, subsection 11.3.2 and example 11.3.5 therein provide expressions for  the first two moments of the number of claims in a given time interval $[0,t]$.
	Both models
can be criticized as overly simple, still
 the assumption that claims (or delayed payouts) are separated by i.i.d. times (as in the renewal Poisson cluster process) often appears in the risk theory (cf. Sparre Andersen model, \cite{AsmuAlbrecher}).

%
\subsubsection{Marked Hawkes process}\label{ss:modhaw}

Key motivating example in our analysis is the so called (linear) marked Hawkes process. 
 Hawkes processes of this type have a neat Poisson cluster representation due to \cite{HaOa}. For this model,
 the clusters $G^{A}$ are recursive aggregation of Cox processes, i.e.
 Poisson processes with random mean measure $ \tilde{\mu}_A  \times Q$ where
 $  \tilde{\mu}_A $ has the following form
\begin{equation} \label{e:muA}
\tilde{\mu}_{A} (B) = \dint_B h(s, A) ds\,,
\end{equation}
for some  fertility (or self--exciting) function $h$, cf. {Example 6.4 (c) of \cite{daley}}. 

It is useful to
introduce a time shift operator $\theta_t$, by denoting  
$$
\theta _t  m =  \sum_j \delta_{t_j+t,a_j}\,,
$$ for an arbitrary point measure $m = \sum_j \delta_{t_j,a_j} \in M_p$ and $t\geq 0$.
Now, for the ground process $N^0 = \sum_{i\geq 1} \delta_{ \taui,A_i}\,$ which is a Poisson point process with intensity $\nu \times Q$ on the space
	$[0,\infty) \times \mathbb{S}$\,,
the cluster process corresponding to a point $( \Gamma,A)$
satisfies the following recursive relation
\begin{equation} \label{GA}
 G^A =  \dsum_{l=1}^{{L_A}}  \left(  {\delta_{\tau^1_l,A^1_l} +} \theta_{\tau^1_l} G^{A^1_l} \right)\,,
\end{equation}
where, given $A,$ $N^A = \sum_{l=1}^{{L_A}} \delta_{\tau^1_l,A^1_l}$ is a Poisson processes with random mean measure 
$ \tilde{\mu}_A  \times Q,$
the sequence $(G^{A^1_l})_l$ is i.i.d., distributed as $G^A$ and independent of $N^A$. 
Thus, at any ancestral point $(\Gamma,A)$  a cluster of points appears as a whole cascade of points to the right in time generated recursively according to \eqref{GA}.  Note that by definition $L_A$ has Poisson distribution conditionally on $A$, with
mean $\kappa_A=\int_0^\infty h(s,A) ds$. It corresponds to the number of the first generation progeny $(A^1_l)$ in the cascade. Note also that the point processes forming the second generation are again Poisson conditionally on the corresponding first generation  mark $A_l^1$. The cascade $G^A$ corresponds to the process formed by the successive generations, drawn recursively as Poisson processes given the former generation.

The marked Hawkes process is obtained by attaching to the ancestors $(\taui,A_{i})$ of the marked Poisson process $
N^0 = \sum_{i\geq 1} \delta_{ \taui,A_i}
$ a cluster of points, denoted by $C_i$, which contains point $(0,A_{i})$  and a whole cascade $G^{A_i}$ of points to the right in time generated recursively according to \eqref{GA} given $A_i$.
Under the assumption
\begin{equation} \label{e:kappa}
\kappa =  \Exp \dint h(s,A) ds < 1\,,
\end{equation}
the total number of points in a cluster is generated by a subcritical branching process. Therefore, 
the clusters are finite  almost surely, and we denote their size by
$K_i {+1}= C_i[0,\infty)$. It is known and not difficult to show that under \eqref{e:kappa},
the clusters always satisfy
$$
\Exp K_i {+1} = \frac{1}{1-\kappa}\,.
$$
Observe that the clusters $C_i$ are independent by construction and  can be represented as

\begin{equation}\label{e:CiHawkes}
C_i= \dsum_{j = 0}^{K_i} \delta_{\taui+T_{ij},A_{ij}}\,,
\end{equation}
with $A_{ij}$ being i.i.d. and $T_{i0} = 0$.
We note that in the case when marks do not influence conditional density, i.e. when $h(s,a) = h(s)$, random variable
$K_i{+1}$ has a so-called Borel distribution with parameter $\kappa$, see \cite{borel}.
Observe also that  in general, marks and arrival times of the final Hawkes process $N$  are not independent  of each other, rather, in the terminology
of \cite{daley}, the marks in the process $N$ are only unpredictable.

Hawkes processes are typically introduced through their conditional intensity. More precisely, a  point process $
N = \sum_{k} \delta_{\tau_k,A^k} \,,
$ represents a Hawkes process of this type if
the random marks $(A^k)$ are i.i.d. with distribution $Q$ on the space $\mathbb{S}$, while the arrivals $(\tau_k)$ have the
conditional intensity of the form 
\begin{equation}
\lambda_t = \lambda(t) = \nu + \sum_{\tau_i< t} h(t-\tau_i,A^i)\,,
\end{equation}
where $\nu > 0$ is a constant and  $h:[0,\infty)\times \mathbb{S}\to \mathbb{R}_+$ is assumed to be integrable  in the sense that
$\int_0^\infty \EE h(s,A) ds < \infty$.
Observe, $\nu$ is exactly the constant which determines the intensity of the underlying Poisson process $N^0$ due to the Poisson cluster representation of the linear Hawkes processes, cf. \cite{HaOa}.
Observe, $\lambda$ is
$\mathcal{F}_t$---predictable, where $\mathcal{F}_t$ stands for 
an internal history of $N$, 
$\mathcal{F}_t = \sigma\{N(I \times S): I \in \mathcal{B}(\mathbb{R}), I \subset (-\infty, t], S \in \mathcal{S}\}$. Moreover, $A^n$'s are assumed to be independent of the past arrival times  $\tau_i$, $i < n$, 
see also \cite{brem}.  
Writing $N_t = N((0,t] \times \mathbb{S})$, one can observe that $(N_t)$ is an integer valued process with nondecreasing paths.
The role of intensity can be described heuristically by the relation
$$
\Pr (dN_t =1 \mid \mathcal{F}_{t-}) \approx \lambda_t  dt\,.
$$

\subsubsection{Stationary version} \label{sss:STAT}

In any of the three examples above, the point process $N$
can be clearly made stationary if we start the construction  in \eqref{e:PoisProc}  on the state space
$\RR \times \mathbb{S}$ with a Poisson process $ \sum_i \delta_{\taui}$ on the whole real line. The resulting stationary cluster process is denoted by $N^*$. Still, from applied perspective, it seems more interesting 
to study the nonstationary version where both the ground process $N^0$ and the cluster process itself have arrivals only from some point onwards, e.g. in the interval $[0,\infty)$ as for instance in \cite{zhu}.

Stability of various cluster models, i.e. convergence towards a stationary distribution in appropriate sense has been extensively studied for various point processes. 
For instance, it is known that  the unmarked 
 Hawkes process on  $[0,\infty)$ converges 
to the stationary version on any compact set and on the positive line under the condition that 

\begin{equation}\label{eq:condstat}
\int_0^\infty s h (s) ds <\infty\,,
\end{equation} see \cite{daley}, p. 232.
Using the method of Poisson embedding, originally due to \cite{kerstan}, \cite{BreMas} (Section 3) obtained   general results on stability of Hawkes processes, even in the  non--linear case.

\section{Central limit theorem}\label{sec:CLT}

As explained in Section 2, the total claim amount for claims, arriving before time
$t$,  can be written as
\[
S(t) = \dsum_{\tau_k \leq t} f(A_k) = \dint_{0}^t \dint_\mathbb{S} f(u) N (ds,du)\,.
\]

The long term behavior of $S(t)$  for general marked Poisson cluster processes
 is the main goal of our study.
As before, by $Q$  we denote the probability distribution of marks on the space $\mathbb{S}$.

Moreover, unless stated otherwise, we assume that the process starts from 0 at time $t=0$, that is $N (-\infty, 0] = 0$. 

In the case of the Hawkes process,
the process $N_t = N([0,t]
\times \mathbb{S}) ,\ t \geq 0$  which only counts the arrival of claims until time $t$ has been  studied in the literature before. It was shown recently under appropriate moment conditions, that in the unmarked case  multitype Hawkes processes satisfy (functional) central limit theorem,
see ~\cite{bac}. \cite{zhu} showed that $N_t$ satisfies central limit theorem even in the more general case of nonlinear Hawkes process and that linear but marked Hawkes have the same property. In the present
section we describe the asymptotic behaviour of the total claim amount process $(S(t))$ for a wide class of marked Poisson cluster processes, even in the case when the total claim process has  heavy tails, and potentially infinite variance or infinite mean.

It is useful in the sequel to introduce random variable
$$
 \tau(t)  = \inf \left\{ n : \taun > t \right\},\ t \geq 0\,.
$$
Recall from \eqref{e:Di} the definition of $D_i$ as
$$
 D_i = \dint_{[0,\infty)\times \mathbb{S}}   f(u) C_i (ds,du)
 = \dsum_{j=0}^{K_i} f(A_{ij}) = \dsum_{j=0}^{K_i} X_{ij}   \,,
 $$
 where $K_i+1 = C_i[0,\infty)$ denotes the size of the $i$th cluster and where we denote 
 $X_{ij} = f(A_{ij})$.
As before, $D_i$ has an interpretation as the total claim amount coming from the $i$th immigrant and
its progeny. 
Note that $D_i$'s form an i.i.d. sequence because the ancestral mark in every cluster comes from an independently marked homogeneous Poisson point process.

Observe that in the nonstationary case we can write
\begin{equation} \label{e:Soft}
 S(t) = \sum_{i=1}^{\tau(t)} D_i 
  - D_{\tau(t)}  - \vep_t\,,\ t \geq 0\,,
\end{equation}
where the last error term represents the leftover or the residue at time $t$, i.e.  the sum of all  the claims arriving after $t$ which belong to the progeny of immigrants arriving before time $t$, that is
$$
 \vep _t =  \dsum_{0\leq \taui \leq t ,  t < \taui+T_{ij}}  f(A_{ij})  \quad t \geq 0\,.
$$

Clearly, in order to characterize limiting behaviour of $S(t)$, it is useful to determine moments and the tail behaviour of random variables $D_i$ for each individual cluster model. 
To simplify the notation, for a generic member of an identically distributed sequence or an array, say $(D_n)$, $(A_{ij})$,  we write $D,\, A\,$ etc.
Under the conditions of existence of second order moments and the behavior of the residue term $\vep_t$, it is not difficult to derive the following proposition. 

\begin{prop} \label{prop:CLT}
Assume the marked Poisson cluster model defined in Section 2. Suppose that $\Exp D^2 <\infty$ and that 
$\vep_t = o_P(\sqrt{t}) $ then,
for $t\toi$, 
\begin{equation}\label{eq:CLT}
 \dfrac{S(t) - t \nu \mu_D}{\sqrt{t\nu \Exp D^2} } \cid N(0,1)\,,
\end{equation}
where $\mu_D = \Exp D$.
\end{prop}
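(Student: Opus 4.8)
The plan is to start from the decomposition \eqref{e:Soft} and to recognise its leading term as a compound Poisson random variable. Write $M_t = N^0([0,t]\times\Sb)$ for the number of immigrants arriving in $[0,t]$; since $N^0$ is a homogeneous Poisson process of rate $\nu$, $M_t$ has the Poisson$(\nu t)$ law, and by the definition of $\tau(t)$ we have $\tau(t) = M_t+1$. The i.i.d. variables $D_i = \sum_{j=0}^{K_i} f(A_{ij})$ depend only on the marks $A_{ij}$ and on the cluster structure, all of which are generated independently of the immigrant arrival times $(\taui)$; hence $(D_i)_{i\ge1}$ is independent of $M_t$. Consequently $\sum_{i=1}^{\tau(t)} D_i - D_{\tau(t)} = \sum_{i=1}^{M_t} D_i =: Y_t$ is genuinely compound Poisson, and \eqref{e:Soft} becomes simply $S(t) = Y_t - \vep_t$.

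Next I would use the classical moment identities for compound Poisson sums, $\Exp Y_t = \nu t\,\mu_D$ and $\Var(Y_t) = \nu t\,\Exp D^2$, the latter following from the law of total variance via $\nu t\,\Var(D) + \nu t\,(\Exp D)^2 = \nu t\,\Exp D^2$; this is precisely what produces the $\Exp D^2$ in the normalisation of \eqref{eq:CLT}. It then remains to establish the central limit theorem for $Y_t$ alone, namely $(Y_t - \nu t\mu_D)/\sqrt{\nu t\,\Exp D^2} \cid N(0,1)$ as $t\toi$. The cleanest route is the characteristic function: with $\varphi(s) = \Exp e^{isD}$ one has $\Exp e^{isY_t} = \exp\bigl(\nu t(\varphi(s)-1)\bigr)$, so evaluating at argument $s/\sqrt{\nu t\,\Exp D^2}$ and inserting the second order expansion $\varphi(u) = 1 + iu\mu_D - \tfrac12 u^2\Exp D^2 + o(u^2)$ as $u\to0$ (valid because $\Exp D^2<\infty$), the terms linear in $u$ cancel against the centring and the log characteristic function of the normalised variable tends to $-s^2/2$; the L\'evy continuity theorem then finishes it. Alternatively one may simply observe that $(Y_t)_{t\ge0}$ is a finite variance L\'evy process and quote the corresponding CLT directly.

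Finally I would write $\dfrac{S(t) - t\nu\mu_D}{\sqrt{t\nu\Exp D^2}} = \dfrac{Y_t - t\nu\mu_D}{\sqrt{t\nu\Exp D^2}} - \dfrac{\vep_t}{\sqrt{t\nu\Exp D^2}}$, note that the second summand is $o_P(1)$ by the hypothesis $\vep_t = o_P(\sqrt t)$, and conclude with Slutsky's theorem. The one place that needs genuine care is the independence of $(D_i)$ from $\tau(t)$ exploited above: it is a direct consequence of the Poisson cluster construction, in which marks and offspring are drawn conditionally on the marks only and never see the immigrant arrival epochs. It is exactly this independence that lets the compound Poisson bookkeeping absorb — with the correct variance $\Exp D^2$ rather than $\Var(D)$ — what would otherwise be an awkward covariance between $\sum_{i\le\tau(t)}(D_i-\mu_D)$ and the randomly varying index $\tau(t)$ in an Anscombe-type argument.
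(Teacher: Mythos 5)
Your proof is correct, but it follows a genuinely different route from the paper. You observe that $\sum_{i=1}^{\tau(t)}D_i-D_{\tau(t)}=\sum_{i=1}^{M_t}D_i$ with $M_t=\tau(t)-1$ the Poisson$(\nu t)$ number of immigrants, which is independent of the i.i.d.\ sequence $(D_i)$ because $N^0$ is independently marked and the clusters are drawn conditionally on the marks only; this turns the leading term into a genuine compound Poisson variable, whose CLT you obtain by a direct characteristic-function expansion, the variance identity $\Var(Y_t)=\nu t\,\Exp D^2$ explaining the normalisation, and Slutsky absorbs $\vep_t=o_P(\sqrt t)$. The paper instead keeps the stopped sum $S^D(t)=\sum_{i=1}^{\tau(t)}D_i$ and invokes the Anscombe-type CLT for two-dimensional (stopped) random walks from Gut, which delivers the same asymptotic variance $t\nu\Exp D^2$ (since $D_i$ and the exponential interarrival times are independent, the covariance correction vanishes), and then disposes of $D_{\tau(t)}/\sqrt t$ separately --- a term your bookkeeping never even produces. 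Your argument is more elementary and self-contained, but it leans on the specific Poissonian and independence structure of the immigrant layer; the paper's stopped-random-walk machinery is what carries over essentially unchanged to the heavy-tailed analogues (Propositions 4.2 and 4.4), where a characteristic-function computation of this kind would no longer be the natural tool. Both proofs are valid for the proposition as stated.
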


\begin{proof}
Denote the first term on the r.h.s. of \eqref{e:Soft}   by
$$
S^D(t) = \sum_{i=1}^{\tau(t)} D_i \, \ t \geq 0\,.
$$
An application of the central limit theorem for two-dimensional random walks, see \cite[Section 4.2, Theorem 2.3]{gut}   yields
$$
 \dfrac{S^D(t) - t \nu \mu_D}{\sqrt{t\nu \Exp D^2} } \cid N(0,1)\,,
$$
as $t\toi$. Since we assumed ${\vep_{t}}/{\sqrt t} \cip 0$, it remains to show that
$$
 \dfrac{D_{\tau(t)}}{\sqrt t} \cip 0 \quad \,t\to \infty.
$$

However, this follows at once from \cite[Theorem 1.2.3]{gut} for instance, or from the fact that in this setting sequences $(\taun)$ and $(D_n)$ are independent.

\end{proof}

 Note that \eqref{prop:CLT} holds for $f$ taking possibly negative values as well. However, when modelling insurance claims, non-negativity assumption seems completely natural, and in  the heavy tail case our proofs actually depend on it, cf. the proof of Theorem~\ref{FCLT:01}.
 In the special case $f\equiv 1$, one obtains the central limit theorem for the number of arrivals in time interval 
 $[0,t]$. Related results have appeared in the literature before,  see for instance \cite{daley72}
 or \cite{zhu}\,. 
 The short proof above stems from the classical Anscombe's theorem, as presented in \cite[Chapter IV]{gut} (cf. \cite[Theorem 3 ii]{daley72})  unlike the argument in \cite{zhu} which relies on martingale central limit theorem and seems not easily extendible, especially for heavy tailed claims we consider next. 
\begin{remark}\label{rem:counterex}It is not too difficult to find examples where the residue term is not negligible. Consider renewal cluster model of subsection \ref{ss:modren} with $K=1, \, X=1$. Let $W_{i1}$ be i.i.d. and regularly varying with index $\alpha < {1}/{2}.$ Then $\varepsilon_t$ has Poisson distribution with parameter $\EE [W \1_ {W<t}] \toi $ and thus, by Karamata's theorem, ${\varepsilon_t}/{\sqrt{t}}$ tends to infinity in probability. Similarly, one can show that $\Var({\varepsilon_t}/{\sqrt{t}})=\EE [W \1_ {W<t}]/t\to 0$ so that $(\varepsilon_t-\EE [W \1_ {W<t}])/\sqrt t$  tends to zero in probability. Thus
		\eqref{eq:CLT} does not hold any more but instead we have
		$$
 \dfrac{S(t) - t \nu \mu_D+\EE [W \1_ {W<t}]}{\sqrt{t\nu \Exp D^2} } \cid N(0,1)\,,\qquad t\to \infty\,.
 $$		
	\end{remark}

\section{Infinite variance stable limit}\label{sec:IVSL}

	It is known that if the claims are sufficiently heavy tailed, properly scaled and centred sums $S(t)$ may converge to
	an infinite variance stable random variable. In the case of
	random sums $S_n= X_1+\cdots +X_n$ of i.i.d. random variables, 
	the corresponding statement  is true if and only if the claims are regularly varying with index $\alpha\in (0,2)$.
	For the  Cram\'er--Lundberg model, i.e. when $N=N_0$, with i.i.d. regularly varying claims of index $\alpha \in (1,2)$,  corresponding limit theorem follows from Theorem 4.4.3 in \cite{gut}.
	A crucial step
	in the investigation of the heavy tailed case is to determine the tail behaviour of the random variables of \eqref{e:Di}.

For regularly varying $D_i$ with index $\alpha \in (1,2)$,  limit theory for two-dimensional random walks in Section 4.2 of \cite{gut} still applies.
Note, if one can show that $D_i$'s have regularly varying distribution, then there exists a sequence $(a_n)\,,\  a_n \toi$, such that
$$
n P(D>a_n)  \to 1\,, \qquad n\to\infty,
$$ 
and an $\alpha$--stable random variable $G_\alpha$ such that
$
{S^D_n} = D_1+\cdots +D_n\,,\qquad n\to\infty,
$
satisfies 
\begin{equation}\label{e:SDG}
\frac{S^D_n - n \mu_D }{a_n} \cid G_\alpha\,,
\end{equation}
where $\mu_D = \Exp D_i$. It is also known that the sequence $(a_n)$ is regularly varying itself with index $1/\alpha$, see \cite{res87}.
In the sequel ,we also set $a_t = a_{\lfloor t \rfloor }$
for any $t \geq 1$\,.

\subsection{Case $\alpha \in (1,2)$}

In this case, the arguments  of the  previous section can be adopted to show.

\begin{prop} \label{prop:12}
	Assume the marked Poisson cluster model introduced in Section 2. Suppose that $ D_i$'s are regularly varying with index $\alpha \in (1,2)$ and that 
	$\vep_t = o_P(a_{t}) $, then 
there exists an $\alpha$--stable random variable $G_\alpha$ such that 
	for $\mu_D = \Exp D_i$
\begin{equation}\label{eq:RV1}
	\frac{S(t) - t \nu \mu_D }{ 
		a_{\nu t } }
	\cid G_\alpha\,,
\end{equation}
as $t\toi$\,.
\end{prop}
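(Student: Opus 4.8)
The plan is to follow exactly the route of the proof of Proposition~\ref{prop:CLT}, replacing the Gaussian central limit theorem for two--dimensional random walks by its $\alpha$--stable counterpart. Write again $S^D(t)=\sum_{i=1}^{\tau(t)}D_i$ and use the decomposition \eqref{e:Soft}, namely $S(t)=S^D(t)-D_{\tau(t)}-\vep_t$. The key structural facts are that $(D_i)$ is i.i.d., independent of the arrival times $(\taun)$ (because $N^0$ is independently marked), and regularly varying of index $\alpha\in(1,2)$, so that $\mu_D=\Exp D$ is finite, \eqref{e:SDG} holds with a norming sequence $(a_n)$ that is regularly varying of index $1/\alpha$ by \cite{res87}, and the limit is a genuine (non--Gaussian) $\alpha$--stable law; meanwhile $(\taun)$ is the sequence of points of a homogeneous Poisson process of rate $\nu$, so its interarrival times have mean $1/\nu$ and finite variance.

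First I would combine \eqref{e:SDG} with the random index $\tau(t)$ through the limit theory for two--dimensional random walks in Section~4.2 of \cite{gut} --- here in the particularly simple situation where the two coordinates, $D_i$ and the $i$th interarrival time, are independent --- to obtain $\bigl(S^D(t)-\tau(t)\mu_D\bigr)/a_{\tau(t)}\cid G_\alpha$. I would then replace the random norming and centering by deterministic ones: since $a_n$ is regularly varying of index $1/\alpha$ and $\tau(t)/(\nu t)\to 1$ a.s., one has $a_{\tau(t)}/a_{\nu t}\cip 1$; and since $\tau(t)-\nu t=O_P(\sqrt t)$ by the central limit theorem for the Poisson process while $a_{\nu t}$ is regularly varying of index $1/\alpha>1/2$, we get $(\tau(t)-\nu t)\mu_D/a_{\nu t}\cip 0$. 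Hence
$$
\frac{S^D(t)-\nu t\,\mu_D}{a_{\nu t}}\cid G_\alpha,\qquad t\toi.
$$

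It remains to show the two error terms are negligible on the scale $a_{\nu t}$. For the residue, the hypothesis $\vep_t=o_P(a_t)$ together with $a_{\nu t}/a_t\to\nu^{1/\alpha}\in(0,\infty)$ (regular variation of $(a_n)$) gives $\vep_t/a_{\nu t}\cip 0$. For the boundary term, $\tau(t)$ is a stopping time for $(\taun)$, which is independent of $(D_n)$, so $D_{\tau(t)}$ has the law of a single $D$; therefore $\Pr(D_{\tau(t)}>\varepsilon a_{\nu t})=\Pr(D>\varepsilon a_{\nu t})\to 0$, which also follows from \cite[Theorem~1.2.3]{gut}. Combining the three displays by Slutsky's lemma yields \eqref{eq:RV1}. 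The only point that requires genuine care is the bookkeeping of norming and centering constants --- passing between $a_{\tau(t)}$, $a_{\nu t}$ and $a_t$ via regular variation and replacing the random centering $\tau(t)\mu_D$ by $\nu t\mu_D$ --- which is precisely where the constraint $\alpha<2$ (equivalently $1/\alpha>1/2$) enters; everything else is a verbatim repetition of the argument in Proposition~\ref{prop:CLT}.
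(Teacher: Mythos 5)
Your proposal is correct and follows essentially the same route as the paper: the decomposition \eqref{e:Soft}, the stopped-random-walk (Anscombe-type) limit theory of Section 4.2 of \cite{gut} applied to $(\Gamma_n)$ and $(S^D_n)$ together with \eqref{e:SDG}, the regular variation of $(a_n)$ to pass from $a_t$ to $a_{\nu t}$ for the residue, and independence of $(\Gamma_n)$ and $(D_n)$ to kill $D_{\tau(t)}$. The only difference is cosmetic: you unpack the citation of Gut's Theorem 4.2.6 into an explicit Anscombe-plus-recentering argument (using $\tau(t)-\nu t=O_P(\sqrt t)$ and $1/\alpha>1/2$), which the paper simply quotes.
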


\begin{proof}
	The proof again follows from the representation
	 \eqref{e:Soft},
	 by an application of Theorem~4.2.6 from \cite{gut}
	on random walks $(\Gamma_n)$ and $(S^D_n)$
	together with  relation \eqref{e:SDG}.
	By assumption we have $\vep_t/a_{\nu t}\sim \nu^{-1/\alpha} \vep_t /a_t {\cip} 0$.
To finish the proof, we observe that the sequences $(\taun)$ and $(D_n)$ are independent, hence
$$
\dfrac{D_{\tau(t)}}{a_{\nu t}} \cip 0, \qquad t\to \infty.
$$
\end{proof}

\subsection{Case $\alpha \in (0,1)$}

In this case, we were not able to find any result of Anscombe's theorem type for two--dimensional random walks of the type used above. Therefore, as our initial step, we prove a theorem which we believe is new and of independent interest. It concerns partial sums of i.i.d. nonnegative regularly varying random variables,
say $(Y_n)$, subordinated to an independent renewal process.
More precisely, set
$V_n= Y_1+\cdots + Y_n\,, n\geq 1$. Suppose that the sequence $(Y_n)$ is independent  of another i.i.d. sequence of nonnegative and nontrivial random variables $(W_n)$. Denote by 
\[
\sigma(t) = \sup \{ k: W_1+\cdots + W_k \leq t \}
\]
the corresponding renewal process,
where we set $\sup \emptyset = 0$\,. 
Recall that  for regularly varying random variables $Y_i$'s there exists a sequence $(a_n)$ such that
$n  \PP (Y_i>a_n) \to 1$, as $n \toi$\,.
The limiting behaviour of the process $V_{\sigma(t)}$ was considered by \cite{anderson} in the case when $W_i's$ are themselves regularly varying with index $\leq 1.$

Since ${\sigma(t)}/{t} \cas \nu,$ if $0<\EE W_i = {1}/{\nu} < \infty,$ one may expect that $V_{\sigma(t)}$ has similar asymptotic behaviour as $V_{\nu t}$ 	for $t \rightarrow \infty$. It is not too difficult to make this argument  rigorous 	
 if for instance $\EE W_i^2  < \infty,$ because then
	$ ( \sigma(t) - t \nu )^2/t,\, t>0$, is uniformly integrable 
  by Gut (2009), Section 2.5. 
	The following functional limit theorem gives precise description of the asymptotic behaviour of $V_{\sigma(t)}$ whenever $W_i$ have a finite mean.
\begin{thm} \label{FCLT:01}
Suppose that  $(Y_n)$ and  $(W_n)$ are independent {nonnegative} i.i.d. sequences of random variables such that $Y_i$'s are regularly varying  with  index $\alpha \in (0,1)$,
and such that $ 0< 1/\nu = \Exp W_i < \infty $.
Then in the space $D[0,\infty)$ endowed with Skorohod's $J_1$ topology
\begin{equation}\label{e:FLT}
	\frac{V_{\sigma(t\cdot)} }{a_{ \nu t } }
	\cid G_\alpha (\cdot)\,,\qquad t\to \infty,
\end{equation}
	where $(G_\alpha(s))_{s\geq 0}$ is an $\alpha$--stable subordinator.
\end{thm}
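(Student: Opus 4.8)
The plan is to establish convergence in two stages: first prove convergence of the finite-dimensional distributions (fidis), then upgrade to functional convergence by verifying tightness in $J_1$. The backbone is the classical fact that for i.i.d. nonnegative regularly varying $(Y_n)$ with index $\alpha\in(0,1)$, one has $V_{\lfloor n\cdot\rfloor}/a_n \cid G_\alpha(\cdot)$ in $D[0,\infty)$ with $J_1$ topology, where $G_\alpha$ is an $\alpha$--stable subordinator; this is standard (e.g. from the point process / L\'evy convergence machinery in \cite{res87}). The task is therefore to transfer this from the deterministic time-change $n\mapsto \lfloor nt\rfloor$ to the random time-change $t\mapsto \sigma(t)$. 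The key structural input is that $(Y_n)$ and $(\sigma(t))$ are \emph{independent}, and that by the strong law of large numbers for renewal processes $\sigma(t)/t \cas \nu$, and moreover $\sigma(t\cdot)/t \cas \nu(\cdot)$ uniformly on compacts.

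For the fidis, fix $0\le s_1<\cdots<s_m$ and condition on the renewal process. Given $\sigma$, the vector $(V_{\sigma(ts_1)},\ldots,V_{\sigma(ts_m)})/a_{\nu t}$ is a function of the partial sums $V$ evaluated at the (random but $\sigma$-measurable) points $\sigma(ts_1)/(\nu t),\ldots$; since $\sigma(ts_i)/(\nu t)\to s_i$ a.s., and since the unconditional process $V_{\lfloor \nu t\, \cdot\rfloor}/a_{\nu t}$ converges to $G_\alpha$ with $G_\alpha$ a.s. continuous at each fixed $s_i$ (a subordinator has only countably many jumps, so fixed times are a.s. continuity points), a conditioning argument plus dominated convergence on the characteristic functions gives the joint convergence. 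Concretely, one writes the conditional Laplace transform $\Exp\big[\exp(-\sum_k \lambda_k V_{\sigma(ts_k)}/a_{\nu t})\,\big|\,\sigma\big]$, uses $a_{\sigma(ts)}/a_{\nu t}\to 1$ by regular variation of $(a_n)$ with index $1/\alpha$ together with $\sigma(ts)/(\nu t)\to 1$, and recognizes the limit as the Laplace transform of $(G_\alpha(s_1),\ldots,G_\alpha(s_m))$, which is $\sigma$-free; then one removes the conditioning by bounded convergence.

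For tightness in $J_1$, I would exploit monotonicity: both $t\mapsto V_{\sigma(t)}$ and the limit $G_\alpha$ are nondecreasing, and for nondecreasing processes $J_1$-tightness together with fidi convergence at a dense set of points (including a cofinal sequence) is essentially automatic --- this is the standard criterion for convergence of monotone processes (convergence of the ``inverse'' / a Dini-type argument; see the treatment of subordinator convergence via records in \cite{res87}). The one genuine subtlety is that the random time change $\sigma$ could, in principle, create spurious small oscillations; but since $\sigma$ is itself a nondecreasing pure-jump process with jumps of size $1$ at rate $O(1)$ while $a_{\nu t}\to\infty$, these contribute negligibly, and the composition of a $J_1$-convergent sequence with an a.s.-uniformly-convergent \emph{continuous} (in the limit, deterministic linear) time change is again $J_1$-convergent by the continuous mapping theorem for composition. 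The main obstacle I anticipate is precisely making this last point fully rigorous: the composition map $(x,\phi)\mapsto x\circ\phi$ is \emph{not} continuous on all of $D\times D$ in $J_1$, so one must use that the limiting time change $s\mapsto\nu s$ is continuous and strictly increasing, which puts us in the regime where composition \emph{is} continuous (cf.\ Whitt's results on the composition map), and combine this with the independence to pass from $\sigma(t\cdot)/t \Rightarrow \nu(\cdot)$ and $V_{\lfloor\nu t\cdot\rfloor}/a_{\nu t}\Rightarrow G_\alpha$ jointly. Once joint convergence of the pair is in hand, continuous mapping finishes the proof.
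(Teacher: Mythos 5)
Your final argument is correct, but it follows a genuinely different route from the paper. The paper works directly with the composed process: it invokes the point-process/partial-sum convergence $(M'_t,V'_t)\cid(M_\alpha,G_\alpha)$, passes to almost sure convergence via Skorohod's representation theorem, sandwiches $V_{\sigma(tc)}$ between $V_{\lfloor tc\nu(1\mp\delta)\rfloor}$ using nonnegativity of the $Y_i$'s and the functional LLN $\sigma(t\cdot)/(t\nu)\to id$, and then upgrades pointwise convergence on a dense set to $J_1$ convergence via the Jacod--Shiryaev criterion (Thm.\ VI.2.15), which requires checking in addition that $\sum_{s\le c}|\Delta V_t(s)|^2\to\sum_{s\le c}|\Delta G_\alpha(s)|^2$ (handled by the same sandwich applied to $Y_i^2$, regularly varying with index $\alpha/2$). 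You instead write $V_{\sigma(t\cdot)}/a_{\nu t}$ exactly as the composition $x_t\circ\phi_t$ with $x_t(u)=V_{\lfloor \nu t u\rfloor}/a_{\nu t}$ and $\phi_t(s)=\sigma(ts)/(\nu t)$, use that the second limit is deterministic (so joint convergence is automatic, independence not even needed), and apply Whitt's continuity of the composition map at $(x,\phi)$ with $\phi$ continuous and strictly increasing. That is a clean, standard and arguably shorter argument, and it makes your preliminary fidi computation redundant; what the paper's bare-hands route buys is independence from the composition-map machinery and an explicit handle on the jumps, which is reused verbatim for the squared-jump functional.

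Two caveats on the intermediate steps you should not leave as stated. First, the claim that for nondecreasing processes fidi convergence on a dense set plus tightness makes $J_1$ convergence ``essentially automatic'' is false: pointwise convergence of monotone functions on a dense set only yields $M_1$-type convergence, and jump-splitting (two nearby pre-limit jumps merging into one limit jump) must be excluded --- this is exactly why the paper verifies the extra squared-jump condition of Jacod--Shiryaev, and why your appeal to Whitt's composition theorem (continuous, strictly increasing limit time change) is the step that actually rules it out. Second, in the fidi sketch the limits are misstated: $\sigma(ts)/(\nu t)\to s$ and $a_{\sigma(ts)}/a_{\nu t}\to s^{1/\alpha}$, not $1$; as written the conditional Laplace transform computation would not produce the right marginal scaling. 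Since your composition argument supersedes that stage, the simplest fix is to drop the fidi/tightness scaffolding and present the composition proof directly.
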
 

\begin{proof}
	 Since $Y_i$'s are regularly varying, it is known,
	 \cite{res87,res07}, that
	  the following point process convergence  holds as $t \toi$
\begin{equation}\label{e:PRMcon}
	  M'_n= \sum_i \delta_{\frac in, \frac{Y_i}{a_n}} \cid  M_{\alpha}
	  \sim \mbox{PRM} (Leb \times d (- y^{-\alpha})) \,, 
\end{equation}
	  with respect to the vague topology on the space
	  of Radon point measures on $[0,\infty)\times (0,\infty]$.
	 Abbreviation PRM  stands for  Poisson random measure indicating that the limit is a Poisson process. 
	 Starting from \eqref{e:PRMcon}, it was shown in \cite[Chapter 7]{res07} for instance, that
	 for an $\alpha$--stable subordinator $G_\alpha(\cdot)$ as in the statement of the theorem
	   \begin{equation}\label{eq:j1}
	   V'_n (\cdot) = \frac{V_{\lfloor n \cdot \rfloor}}{a_n} \cid G_\alpha (\cdot)\,,\qquad t\to \infty,
	   \end{equation}
	   in Skorohod's $J_1$ topology on the space $ D[0,\infty)$.  Observe that
	   since $\alpha \in (0,1)$, no centering is needed, and that 
	   one can  substitute the integer index $n$ by a continuous index $t \toi$. Note further that  we  have the joint convergence
	 \begin{equation} \label{e:MVcon}
	    (M'_t,V'_t) \cid (M_\alpha, G_\alpha),\qquad t\to \infty,
	 \end{equation}
	   in the product topology on the space of point measures and c\`adl\`ag functions.
	  Moreover, it is known that
	  the jump times and sizes of the $\alpha$--stable subordinator  $G_\alpha$  correspond to the points of the limiting point process $M_{\alpha}$\,.

The space of point measures and the space of c\`adl\`ag functions $D[0,\infty)$ are both Polish, in respective  topologies,  therefore,
Skorohod's representation theorem applies. Thus, we can assume
	that convergence in \eqref{e:MVcon} holds a.s.
	 on a certain probability space $(\Omega,\mathcal{F},P)$, and in particular 
	 there exists
	  $\Omega' \subseteq \Omega$, such that $P(\Omega') = 1$ and
for all $\omega \in \Omega'$,
	$V'_t  \to G_\alpha $ in $J_1$  and
	$M'_t \to M_\alpha$ in vague topology.
	By Chapter VI, Theorem 2.15 in \cite{JacShi}, for any such $\omega$
	there exists a dense set $B=B(\omega)$ of points in $[0,\infty)$ such that
	\[
	{V'_t}  (s) \to G_\alpha (s)\,,\qquad t\to \infty,
	\]
	for every $s \in B $\,, where actually $B $ is simply the set of all nonjump  times in the path of the process $G_\alpha $.  
	   On the other hand, it is known that in $J_1$ topology,
	   on some set $\Omega''$ such that $P(\Omega'') = 1$,
	  \begin{equation} \label{e:taucas}
	    \frac{\sigma(t\cdot)}{t\nu}  \to id(\cdot)\,,\qquad t\to \infty,
	  \end{equation}
where $id$ stands for the identity map. This follows directly by an application of Theorem 2.15 in Chapter VI of \cite{JacShi}.
 Moreover, by Proposition VI.1.17 in ~\cite{JacShi}, the convergence in \eqref{e:taucas}  holds locally uniformly on $D[0,\infty)$.
	 
Consider now for fixed $t>0$ and $\omega \in \Omega' \cap \Omega''$
\[
 V_t  (s) = \dfrac{V_{\sigma(t  s)  }}{a_{\nu t}}\,,
 \quad s \geq 0\,.
\]	 
From \eqref{e:taucas} we may expect that
$V_t  (s) \approx V'_{t\nu } (s).$
Indeed, for any fixed $0<\delta<1$ and all large $t$, we know that
$ \lfloor tc\nu (1-\delta) \rfloor\leq\sigma(tc)
\leq \lfloor tc\nu (1+\delta) \rfloor$,
which by monotonicity of the sums implies
\[
  \dfrac{V_{\lfloor tc\nu (1-\delta) \rfloor} }{a_{\nu t}}
  \leq\dfrac{V_{\sigma(t  c)} }{a_{\nu t}}
  \leq\dfrac{V_{\lfloor tc\nu (1+\delta) \rfloor } }{a_{\nu t}}\,.
\]
Now, for $c(1-\delta) $ and $c(1+\delta)$ in $B$, the left hand side and the right hand side above converge
to
$G_\alpha(c(1-\delta))$ and $G_\alpha(c(1+\delta))$.
Thus, if we consider $c\in B$ and let $\delta \to 0$,  then
\begin{equation} \label{e:V2Gcon}
\dfrac{V_{\sigma(t c)} }{a_{\nu t}} \to G_\alpha(c)\,,\qquad t\to \infty,
\end{equation}
for all $\omega \in \Omega' \cap \Omega''$ and thus with probability $1$.

By Theorem 2.15 in Chapter VI in \cite{JacShi}, to prove \eqref{e:FLT}, it remains to show that for all $\omega \in \Omega' \cap \Omega''$ and $c\in B$, as $t \toi$
\begin{equation} \label{e:DeltaV}
	 \dsum_{0<s\leq c} | \Delta V_t(s)|^2
	 = \dsum_{i<\sigma(tc) } \left(\frac{Y_i}{a_{t\nu}}\right)^2
	 \to  \dsum_{0<s\leq c} | \Delta G_\alpha(s)|^2 \,,
\end{equation}	
where, for an arbitrary c\`adl\`ag process $X(t)$ at time $t \geq 0$, we denote $\Delta X(t) = X_{t} - X_{t-}\,.$
Observe that 
	$$
	 G_{\alpha/2}(c):=\dsum_{0<s\leq c} | \Delta G_\alpha(s)|^2
	$$
	defines an $\alpha/2$--stable subordinator and that the squared random variables
 $Y_i^2$ are again regularly varying with index $\alpha/2$ with the property that
 	$n \PP(Y_i^2 >a^2_n) \to 1$. A similar approximation argument as for \eqref{e:V2Gcon} shows that \eqref{e:DeltaV} indeed holds, which concludes the proof.
\end{proof}

Assume now that  $P(D>x)= x^{-\alpha} \ell(x)$ for some slowly varying function $\ell$ and $\alpha \in(0,1)$.
Select a sequence $a_n \toi$ such that
$
n P(D>a_n)  \to 1\,, 
$ 
as $n\toi$. Under suitable conditions on the residue
term $\vep_t$ we obtain the following.

\begin{prop} \label{prop:01}
	Assume that $ D_i$'s are regularly varying with index $\alpha \in (0,1)$ and that 
	$\vep_t = o_P(a_{t}) $. Then, 
there exists an $\alpha$--stable random variable $G_\alpha$ such that 
\begin{equation}\label{eq:RV2}
	\frac{S(t) }{ 
		a_{\nu t } }
	\cid G_\alpha\,,
\end{equation}
	as $t\toi$\,.
\end{prop}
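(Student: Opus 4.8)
The plan is to reduce $S(t)$ to the subordinated sum $V_{\sigma(t)}$ of Theorem~\ref{FCLT:01} and then simply read off the limit from that result. The first step is to exploit the decomposition \eqref{e:Soft}. Since the immigrant arrivals $\taui$ are the points of a homogeneous Poisson process of rate $\nu$, the increments $W_i = \taui - \Gamma_{i-1}$ (with $\Gamma_0 = 0$) form an i.i.d. sequence with $\Exp W_i = 1/\nu \in (0,\infty)$, and because each $D_i$ is a functional of the cluster $C_i$ attached to an independently marked Poisson process, the sequence $(D_i)$ is i.i.d. and independent of $(W_i)$. Writing $\sigma(t) = \sup\{k : \Gamma_k \le t\}$ and $V_n = D_1 + \cdots + D_n$ exactly as in Theorem~\ref{FCLT:01}, one has $\tau(t) = \sigma(t) + 1$ almost surely, hence $\sum_{i=1}^{\tau(t)} D_i - D_{\tau(t)} = V_{\sigma(t)}$, so that \eqref{e:Soft} reads $S(t) = V_{\sigma(t)} - \vep_t$.

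The second step is to apply Theorem~\ref{FCLT:01} with $Y_i = D_i$ and the $W_i$ above; its hypotheses hold since $D_i$ is regularly varying with index $\alpha \in (0,1)$ and $0 < 1/\nu = \Exp W_i < \infty$. It yields $V_{\sigma(t\cdot)}/a_{\nu t} \cid G_\alpha(\cdot)$ in $D[0,\infty)$ under the $J_1$ topology, where $G_\alpha(\cdot)$ is an $\alpha$--stable subordinator. Composing with the evaluation map $x \mapsto x(1)$, which is continuous at every path having no jump at time $1$ and hence continuous almost surely with respect to the law of the subordinator $G_\alpha$, the continuous mapping theorem gives $V_{\sigma(t)}/a_{\nu t} \cid G_\alpha(1)$, and $G_\alpha := G_\alpha(1)$ is the $\alpha$--stable random variable in the statement.

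The last step disposes of the residue. As $(a_n)$ is regularly varying with index $1/\alpha$, we have $a_{\nu t} \sim \nu^{1/\alpha} a_t$ as $t \toi$, so the assumption $\vep_t = o_P(a_t)$ gives $\vep_t / a_{\nu t} \cip 0$; Slutsky's theorem applied to $S(t)/a_{\nu t} = V_{\sigma(t)}/a_{\nu t} - \vep_t/a_{\nu t}$ then yields \eqref{eq:RV2}. I expect no real obstacle here: the substantive work is all contained in Theorem~\ref{FCLT:01}, and the only point in the present argument that deserves genuine attention is the reduction $S(t) = V_{\sigma(t)} - \vep_t$ together with the independence of $(D_i)$ and $(W_i)$ — precisely what allows Theorem~\ref{FCLT:01} to be invoked — after which the conclusion follows without any further estimates.
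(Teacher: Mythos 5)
Your argument is correct and follows essentially the same route as the paper: the paper also applies Theorem~\ref{FCLT:01} with $Y_i=D_i$ and $W_i$ the i.i.d. exponential inter-arrival times of the immigrant Poisson process, and then disposes of $\vep_t$ via $a_{\nu t}\sim\nu^{1/\alpha}a_t$. Your observation that $\sum_{i=1}^{\tau(t)}D_i-D_{\tau(t)}=V_{\sigma(t)}$ exactly, and the explicit projection/continuous-mapping step at time $1$, are just cleaner write-ups of details the paper leaves implicit.
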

\begin{proof}
	The proof follows roughly the same lines as the proof of Proposition~\ref{prop:12}, but here we rely on an application of the previous theorem to the  random walks $(\Gamma_n)$ and $(S^D_n)$. Just, instead of $Y_i$'s and $W_i$'s we have  $D_i$'s and an independent sequence of i.i.d. exponential random variables with parameter $\nu$\,.

\end{proof}

\begin{remark}\label{statvers} 
One can consider total claim amount in the period $[0,t]$  for  the stationary model of subsection~\ref{sss:STAT}, i.e.
\[
S^*(t) = \dint_{0}^t \dint_\mathbb{S} f(u) N^* (ds,du)\,,\qquad t \geq 0\,.
\]
 Here again,   $S^*(t)$  has a similar representation as in \eqref{e:Soft} but with an additional term on the right hand side, i.e.
    \begin{equation} \label{e:Soft3}
  S^*(t) = \sum_{i=1}^{\tau(t)} D_i 
  - D_{\tau(t)}  - {\vep}_t + {\vep}^*_{0,t}\,,\qquad t \geq 0\,,
  \end{equation}
  where 
    $$
  \vep^*_{0,t} = \dsum_{\Gamma_i \leq 0 ,\  0 < \Gamma_i+T_{ij}<t}  X_{ij}.
  $$
  Clearly, by stationarity 
\begin{equation} \label{eq:vepminus}
  \vep_t = \dsum_{0\leq \taui \leq t ,  t < \taui+T_{ij}}  X_{ij} \eind \vep^{-}_t = \dsum_{-t\le \taui \leq 0 ,  0 < \taui+T_{ij}}  X_{ij}.
\end{equation}
  Hence, $\vep_t = o_P(a_{t}) $ yields  $\vep^-_t = o_P(a_{t}) $ for any sequence $(a_t)$ and therefore
  \[
  \vep^*_{0,t} \leq \vep_t ^-+ \dsum_{\Gamma_i < -t ,\  0 < \Gamma_i+T_{ij}<t}  X_{ij}= \dsum_{\Gamma_i < -t ,\  0 < \Gamma_i+T_{ij}<t}  X_{ij}+o_P(a_{t})\,.
  \]
  In particular, conclusions of propositions~\ref{prop:CLT}, \ref{prop:12} and \ref{prop:01}  hold for random variables $S^*(t)$ too under the additional assumption that
  \begin{equation}\label{eq:adas}
  \tilde \vep_t:=\dsum_{\Gamma_i \leq -t ,\  0 < \Gamma_i+T_{ij}<t}  X_{ij} =o_P(a_{t}).
\end{equation}
\end{remark}

\section{Total claim amount for special models}\label{sec:APP} 

 As we have seen in the previous two sections, it is relatively easy to describe asymptotic behaviour of the total claim amount $S(t)$ as long as we are able to determine the moments and tail properties of the random variables $D_i$ and the residue random variable $\vep_t$ in \eqref{e:Soft} (and also $\tilde \vep_t$ in \eqref{eq:adas} for the stationary version). However, this is typically a rather technical task, highly dependent on an individual Poisson cluster model. 
 In this section we revisit three models introduced in
 Subsection~\ref{ss:Mod}, characterizing for each of them the limiting distribution of the total claim amount under appropriate conditions. Note  that the cluster sum $D$ for all three models admits the following representation
\[
D\stackrel{d}{=}\sum_{j=0}^K X_j,
\]
for $(X_j)_{j\ge 0}$ i.i.d. copies of $f(A)$ and some integer valued $K$ such that $\EE[K_1]<\infty$. Throughout, we assume that the random variables $K$ and $(X_j)_{j\ge 1}$ are independent. The sum  $\sum_{j=1}^KX_j$ has a  so called compound distribution. Its first two moments exist under the following conditions 
\begin{itemize}
\item if $\EE[X]<\infty$ and $\EE[K]<\infty$, then $\mu_D = \EE D=(1+\EE[K])\EE[X] < \infty$,
\item if $\EE[X^2]<\infty$ and $\EE[K^2]<\infty$, then $\EE D^2=(\EE[K]+1)\EE[X^2]+(\EE[K^2]+\EE[K])\EE[X]^2 < \infty$. 
\end{itemize}
The tail  behaviour of compound sums was often studied under various conditions (see \cite{RoSe,fayetal,HuSa,denisov}). We list below some of these conditions, which are applicable to our setting.
\begin{itemize}
\item[{\bf (RV1)}] If $X$ is regularly varying with index $\alpha >0$  and $\PP(K>x)=o(\PP(X>x))$, then $\PP(D>x) \sim (\EE[K]+1)\PP(X>x)$ as $x\to \infty$, see \cite[Proposition 4.1]{fayetal},
\item[{\bf (RV2)}] If $K$ is regularly varying with index $\alpha \in (1,2)$ and $\PP(X>x)=o(\PP(K>x))$, then $\PP(D>x) \sim \PP(K>x/\EE[X])$ as $x\to \infty$, see \cite[Theorem 3.2]{RoSe} or \cite[Proposition 4.3]{fayetal},
\item[{\bf (RV3)}] If $X$ and $K$   are both regularly varying with index $\alpha \in (1,2)$ and tail equivalent, see \cite[Definition 3.3.3]{embrechtsetal}, then $\PP(D>x) \sim (\EE[K]+1)\PP(X>x) + \PP(K>x/\EE[X])$ as $x\to \infty$, \cite[Theorem 7]{denisov}.
\end{itemize}
We will refer to the last three conditions as the sufficient conditions {\bf (RV)}.
  
\subsection{Mixed binomial cluster model}\label{sec:mixbin}
Recall from subsection \ref{ss:modbin} that the clusters in this model have the following form
$$
G^{A_i} = \dsum_{j=1}^{K_i} \delta_{W_{ij}, A_{ij}}\,.
$$
Assume:
\begin{itemize} 
	\item $(K_i,(W_{ij})_{j\ge 1},(A_{ij})_{j\ge 0})_{i\ge 0}$ constitutes an i.i.d. sequence\,,
 \item $(A_{ij})_{j\ge0}$ are i.i.d. for any fixed $i$\,,
 \item $(A_{ij})_{j\ge1}$ is independent of $K_i,(W_{ij})_{j\ge 1}$ for all $i\ge 0$\,,
 \item $(W_{ij})_{j\ge 1}$ are conditionally i.i.d. and independent of $K_i$ given  $A_{i0}$.
\end{itemize}
Thus we do not exclude the possibility of dependence between $K_i,(W_{ij})_{j\ge 1}$ and the ancestral mark $A_{i0}$. For any $\gamma>0$, we  denote  by 
\[
A,\,X_j,\, K, W_j,\, m_A\,,m_A^{(\gamma)}\,,
\] generic random variables
 with the same distribution as
 $A_{ij},\,X_{ij}=f(A_{ij}),\, K_i, W_{ij}$, $\EE[K_i\mid A_{i0}]$ and $\EE[K_i^\gamma\mid A_{i0}]$ respectively.  Using the cluster representation, one can derive the asymptotic properties of $S(t)$. Let us first consider the 
Gaussian CLT under appropriate 2nd moment assumptions.  Denote by $\PP(W \in \cdot \mid A)$ the distribution of $W_{ij}$'s given $A_{i0}$. 
\begin{corollary}\label{cor:CLTmix}
Assume that $\EE[X^2]<\infty$ and $\EE[K^2]<\infty$. If
\begin{equation}\label{eq:condCLTmix}
\sqrt t \EE[m_A\PP(W>t\mid A)] \to 0,\qquad t\to \infty,
\end{equation}
then the relation 
\eqref{eq:CLT} holds.
\end{corollary}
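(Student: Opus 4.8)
The plan is to reduce the statement to Proposition~\ref{prop:CLT}, so the two hypotheses of that proposition must be checked: $\Exp D^2<\infty$ and $\vep_t=o_P(\sqrt t)$. The moment condition is immediate from the compound--sum formula recalled just before the corollary: with $\Exp[X^2]<\infty$ and $\Exp[K^2]<\infty$ one gets $\Exp D^2=(\Exp[K]+1)\Exp[X^2]+(\Exp[K^2]+\Exp[K])\Exp[X]^2<\infty$, and in particular $\mu_D=(1+\Exp[K])\Exp[X]<\infty$, so the centering and normalization in \eqref{eq:CLT} are well defined. Everything else concerns the residue term.

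Since $\vep_t\ge0$, it suffices by Markov's inequality to show $\Exp\vep_t=o(\sqrt t)$. For the mixed binomial model the residue equals $\vep_t=\dsum_{0\le\taui\le t,\ t<\taui+W_{ij}}X_{ij}$, the total of the offspring claims $X_{ij}$ of immigrants arriving in $[0,t]$ whose displacement $W_{ij}$ overshoots the remaining time $t-\taui$. First I would condition on the ground Poisson process $N^0$ and apply Campbell's formula (valid by Tonelli since all summands are nonnegative), which gives
\[
\Exp\vep_t=\nu\dint_0^t\Exp\Bigl[\dsum_{j=1}^{K}X_j\,\1_{\{W_j>t-s\}}\Bigr]\,ds.
\]
Then, conditioning on the ancestral mark $A=A_{i0}$ and using the model's independence structure --- $(X_j)_{j\ge1}$ i.i.d.\ with the law of $X$ and independent of $(K,(W_j)_{j\ge1})$, the $W_j$ conditionally i.i.d.\ given $A$, and $K$ conditionally independent of $(W_j)$ given $A$ --- a conditional Wald identity collapses the inner expectation to $\Exp[X]\,\Exp[m_A\,\PP(W>u\mid A)]$ with $u=t-s$, whence, after the substitution,
\[
\Exp\vep_t=\nu\,\Exp[X]\dint_0^t\Exp\bigl[m_A\,\PP(W>u\mid A)\bigr]\,du.
\]

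It then remains to show $\int_0^t g(u)\,du=o(\sqrt t)$, where $g(u):=\Exp[m_A\,\PP(W>u\mid A)]$ is nonincreasing, bounded by $\Exp[m_A]=\Exp[K]<\infty$, and satisfies $\sqrt u\,g(u)\to0$ by hypothesis \eqref{eq:condCLTmix}. Splitting the integral at a fixed level $M$, the part over $[0,M]$ is at most $M\Exp[K]$, while the part over $[M,t]$ is bounded by $\bigl(\sup_{u\ge M}\sqrt u\,g(u)\bigr)\int_M^t u^{-1/2}\,du\le2\sqrt t\,\sup_{u\ge M}\sqrt u\,g(u)$; dividing by $\sqrt t$, sending $t\toi$ and then $M\toi$ yields $\int_0^t g=o(\sqrt t)$. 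Hence $\Exp\vep_t=o(\sqrt t)$, so $\vep_t/\sqrt t\cip0$, and Proposition~\ref{prop:CLT} gives \eqref{eq:CLT}. The only step requiring real care is the Campbell/conditional--Wald reduction: one must track the conditional independence of $K$, $(W_j)$ and $(X_j)$ given $A$ so that the offspring claim sizes factor out as $\Exp[X]$ and the count--displacement part collapses to $m_A\,\PP(W>u\mid A)$; the final integral estimate is an elementary Karamata-type manipulation.
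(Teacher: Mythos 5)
Your proposal is correct and follows essentially the same route as the paper: reduce to Proposition~\ref{prop:CLT} via the compound-sum moment formula, bound $\vep_t$ by Markov's inequality, and compute $\EE[\vep_t]=\nu\EE[X]\int_0^t\EE[m_A\PP(W>u\mid A)]\,du$ using Campbell's formula (the paper cites Lemma 7.2.12 of Mikosch) together with the conditional independence of $K$ and $(W_j)$ given $A$. The only cosmetic difference is the last step, where you use an elementary splitting of the integral at a level $M$ instead of the paper's L'H\^opital argument; both are valid.
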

Observe  that \eqref{eq:condCLTmix} is slightly weaker than the existence of the moment  $\EE[K \sqrt W]<\infty$.
\begin{proof}
It follows from the compound sum representation of $D$ that $\EE D^2<\infty$ as soon as $\EE[X^2]<\infty$ and $\EE[K^2]<\infty$. By Proposition \ref{prop:CLT}, it remains  to show that $\vep_t = o_P(\sqrt t)$. In order to do so, we use the Markov inequality 
\[
\PP(\vep_t>\sqrt t)\le \frac{\EE[\vep_t]}{\sqrt t}=\frac{\EE\left[\sum_{0\le \Gamma_i\le t}\sum_{j=1}^{K_i}\1_{t< \Gamma_i+W_{ij}}f(A_{ij})\right]}{\sqrt t}.
\]
We use Lemma 7.2.12 of \cite{mikosch} with $f(s) = \sum_{j=1}^{K_i}\1_{W_{ij}> t-s} f(A_{ij})$ in order to compute the r.h.s. term as
\begin{align*}
\frac{\int_0^t\EE\left[\sum_{j=1}^{K_i}\1_{W_{ij}> t-s} f(A_{ij})\right]\nu ds}{\sqrt t}
&=\frac{\nu\EE[X]\int_0^t\EE\big[\EE\big[ \sum_{j=1}^{K_i}\1_{W_{ij}> t-s} \mid A_{i0}\big]\big]ds}{\sqrt t}\\
&=\frac{\nu\EE[X]\int_0^t\EE[m_A\PP(W> x\mid A)]dx}{\sqrt t}.
\end{align*}
Notice that the last identity is obtained thanks to the  independence of $K_i$ and $(W_{ij})_{j\ge 0}$  conditionally on $A_{i0}$.
We conclude by the L'H\^opital's rule  that this converges to $0$ under \eqref{eq:condCLTmix}.
\end{proof}
For regularly varying  $D$ of order $1<\alpha<2$, we obtain the corresponding limit theorem under weaker assumptions on the tail of the waiting time $W$.
\begin{corollary} \label{cor:stable1mix}
Assume that one of the  conditions {\bf (RV)} holds for $1<\alpha<2$, so that  $D$ is regularly varying. When
\begin{equation}\label{eq:condstable1mix}
t^{1+\delta-1/\alpha} \EE[m_A\PP(W>t\mid A)] \to 0,\qquad t\to \infty,
\end{equation}
for some $\delta>0$ the relation 
\eqref{eq:RV1} holds.
\end{corollary}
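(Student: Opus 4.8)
The plan is to verify the hypotheses of Proposition~\ref{prop:12}, namely that $D$ is regularly varying with index $\alpha\in(1,2)$ and that $\vep_t = o_P(a_t)$, where $(a_n)$ is the normalizing sequence with $nP(D>a_n)\to1$. Regular variation of $D$ is immediate from the assumption that one of the conditions {\bf (RV)} holds. So the entire burden is the residue estimate $\vep_t = o_P(a_t)$, and I would prove the stronger statement $\vep_t = o_P(t^{1/\alpha-\delta'})$ for a small $\delta'>0$, which suffices because $(a_n)$ is regularly varying with index $1/\alpha$ and hence $a_t \gg t^{1/\alpha-\delta'}$ eventually.

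First I would reproduce the computation from the proof of Corollary~\ref{cor:CLTmix}: by Markov's inequality, for any threshold $b_t\to\infty$,
\[
\PP(\vep_t > b_t) \le \frac{\EE[\vep_t]}{b_t} = \frac{\nu\,\EE[X]\int_0^t \EE[m_A\,\PP(W>x\mid A)]\,dx}{b_t},
\]
where the identity for $\EE[\vep_t]$ comes from Lemma~7.2.12 of \cite{mikosch} together with the conditional independence of $K_i$ and $(W_{ij})_j$ given $A_{i0}$, exactly as in Corollary~\ref{cor:CLTmix}. Taking $b_t = t^{1/\alpha-\delta'}$, it remains to show that $t^{-1/\alpha+\delta'}\int_0^t \EE[m_A\,\PP(W>x\mid A)]\,dx \to 0$. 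The hypothesis \eqref{eq:condstable1mix} states $x^{1+\delta-1/\alpha}\EE[m_A\,\PP(W>x\mid A)]\to0$, i.e. $\EE[m_A\,\PP(W>x\mid A)] = o(x^{1/\alpha-1-\delta})$. Integrating, $\int_0^t \EE[m_A\,\PP(W>x\mid A)]\,dx = o\big(\int_0^t x^{1/\alpha-1-\delta}dx\big) = o(t^{1/\alpha-\delta})$, using $1/\alpha-1-\delta > -1$ for $\delta$ small (valid since $\alpha<2$ gives $1/\alpha>1/2$; one may shrink $\delta$ if necessary, and $\delta'<\delta$). Hence $t^{-1/\alpha+\delta'}\int_0^t(\cdots)dx = o(t^{\delta'-\delta})\to0$. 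This gives $\vep_t = o_P(t^{1/\alpha-\delta'}) = o_P(a_t)$, and the conclusion \eqref{eq:RV1} follows from Proposition~\ref{prop:12}.

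The only mild subtlety — and the step I would be most careful about — is the handling of the integral near $x=0$, since $x^{1/\alpha-1-\delta}$ is integrable at $0$ precisely when $1/\alpha-1-\delta>-1$, i.e. $\delta<1/\alpha$; this is automatic for small $\delta$, but one should note that $\PP(W>x\mid A)\le1$ makes the integrand bounded near $0$ anyway, so no real issue arises, and the $o(\cdot)$ estimate from \eqref{eq:condstable1mix} controls only the tail, which is what matters for the growth rate in $t$. Everything else is the routine bookkeeping already carried out in Corollary~\ref{cor:CLTmix}, now with $\sqrt t$ replaced by $a_{\nu t}\asymp a_t$ and with L'Hôpital replaced by the elementary Karamata-type integration above.
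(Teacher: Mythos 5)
Your proposal is correct and follows essentially the same route as the paper: Markov's inequality applied to $\vep_t$, the expectation formula for $\EE[\vep_t]$ from Corollary~\ref{cor:CLTmix}, and the comparison $t^{1/\alpha-\delta}=o(a_t)$ from regular variation of $(a_t)$. The only difference is that you integrate the hypothesis \eqref{eq:condstable1mix} directly instead of invoking L'H\^opital's rule, which is a cosmetic variation of the same estimate.
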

The condition \eqref{eq:condstable1mix} is  slightly weaker than assuming $\EE[m_A W^{1+\delta-1/\alpha}]<\infty$.  Notice that when $\alpha\to 1^+$ and $K$ is independent of $W$, this condition boils down  to the existence of a $\delta$'th moment of $W$ for any strictly positive $\delta$. 
\begin{proof}
By definition,  $(a_t)$ satisfies $t\PP(D>a_t)\to 1$ as $t\to \infty$ and $(a_t)$ is regularly varying with index  $1/\alpha$. Applying the Markov inequality as in the proof of Corollary \ref{cor:CLTmix}, we obtain
\[
\PP(\vep_t>a_t)\le \frac{\EE[\vep_t]}{a_t}=\frac{\nu\EE[X]\int_0^t\EE[m_A\PP(W> s\mid A)]ds}{a_t}.
\]
The claim follows now by the L'H\^opital's rule and the relation $t^{1/\alpha-\delta} = o(a_t)$ for any $\delta>0$.
\end{proof}

\begin{remark}\label{statvers:binom}
In the context of the mixed binomial model, consider the total claim amount of the stationary process denoted by $S^*(t)$ which takes into account also the arrivals in the interval $(-\infty,0)$, see Remark~\ref{statvers}. Assume for simplicity that $K_i$'s and $(W_{ij})$'s are unconditionally  independent. Then $\tilde \vep_t$ from \eqref{eq:adas} is $o_P(a_{t})$ under the same conditions as in Corollaries~\ref{cor:CLTmix} and \ref{cor:stable1mix}, where we set $a_t = \sqrt t$ in the former case. Indeed, we will show that
$$
\EE \tilde \vep_t
= \EE \left(\dsum_{\Gamma_i \leq -t ,\  0 < \Gamma_i+W_{ij}<t}  X_{ij}\right) = 
\EE  \left(\dsum_{-t\le \taui \leq 0 ,  t < \taui+W_{ij}}  X_{ij}
 \right)\,,
$$
 so that $\EE \tilde \vep_t= o_P(a_{t})$ as well since the r.h.s.  is dominated by $\EE \vep^-_t  = o(a_t)\,,$
cf.~\eqref{eq:vepminus}.

 Note first that under assumption of the last two corollaries, individual claims have finite expectation, i.e. $\EE X <\infty$. So it suffices to show that
$$
 I_1 := \EE \tilde \vep_t/ \EE X =  
 \EE  \dsum_{\Gamma_i < -t } \dsum_{j=1}^{K_i} \1_{  0 < \Gamma_i+W_{ij}<t} 
 =  \EE  \dsum_{-t<\Gamma_i < 0 } \dsum_{j=1}^{K_i} \1_{\Gamma_i+W_{ij}>t} 
 =: I_2   \,. 
$$
From $I_1,I_2$ we  subtract respectively l.h.s. and r.h.s. of the equality
\[
 \EE  \dsum_{-2t<\Gamma_i < -t } \dsum_{j=1}^{K_i}  \1_{\Gamma_i+W_{ij} \in (0,t) } \1_{W_{ij}\in (t,2t]}  =
  \EE  \dsum_{-t<\Gamma_i < 0 } \dsum_{j=1}^{K_i}  \1_{\Gamma_i+W_{ij} \in (t,2t) } \1_{W_{ij}\in (t,2t]}\,,
\] 
where the equality follows by the stationarity of the underlying Poisson process,
to obtain 
$$
  J_1 = \EE  \dsum_{-\infty<\Gamma_i < -t } \dsum_{j=1}^{K_i}  \1_{0<\Gamma_i+W_{ij} < t } \1_{W_{ij} > 2t }
   = \EE K \dint_{-\infty}^{-t} \nu ds \dint_{-s \vee 2t }^{t-s} dF_W(u) 
$$
and 
$$
    J_2 = \EE  \dsum_{-t <\Gamma_i < 0 } \dsum_{j=1}^{K_i}  \1_{\Gamma_i+W_{ij} > t } \1_{W_{ij} > 2t }
    = \EE K  \dint_{-t}^{0} \nu ds \dint_{2t}^\infty dF_W(u) 
$$
where $F_W$ denotes the distribution function of delays $(W_{ij})$.
Finally, note that 
$$
 J_1 = \EE K   \dint_{ 2t }^{\infty} dF_W(u)   \dint_{-u}^{t-u} \nu ds = J_2\,.
$$
\end{remark}

Since we assumed that $\EE[K]<\infty$, the regular variation property of $D$ with index $\alpha\in(0,1)$ can arise only through the claim size distribution, see Proposition 4.8 in \cite{fayetal}. It turns out that in such a heavy tailed case,  no additional assumption on the waiting time $W$ is needed.

\begin{corollary} \label{cor:stable2mix}
Assume that $X$ is regularly varying of order $0<\alpha<1$, then the relation  \eqref{eq:RV2} holds.
\end{corollary}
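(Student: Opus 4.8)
The plan is to apply Proposition~\ref{prop:01} directly, so the only thing to verify is the residue condition $\vep_t = o_P(a_t)$, where $(a_t)$ is the regularly varying (index $1/\alpha$) scaling sequence associated with $D$. Since $X = f(A)$ is regularly varying of index $\alpha \in (0,1)$ and $\EE[K]<\infty$, condition \textbf{(RV1)} applies (with the roles of $X$ and $K$ as there, noting $\PP(K>x)=o(\PP(X>x))$ follows from $\EE[K]<\infty$ and $\alpha<1$), so $D$ is regularly varying of index $\alpha$ with $\PP(D>x)\sim(\EE[K]+1)\PP(X>x)$; in particular $\EE D = \infty$ and no centering is needed.

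The key step is bounding $\vep_t = \sum_{0\le \Gamma_i\le t}\sum_{j=1}^{K_i}\1_{t<\Gamma_i+W_{ij}}X_{ij}$. Unlike the proofs of Corollaries~\ref{cor:CLTmix} and \ref{cor:stable1mix}, here $\EE X = \infty$, so the Markov inequality on $\EE[\vep_t]$ is unavailable. Instead I would use a truncation/direct probabilistic argument: condition on the ground Poisson process and the cluster data, and observe that for $\vep_t$ to exceed $\epsilon a_t$ one needs at least one summand $X_{ij}$ to be large, or a moderate number of moderate summands. More concretely, I would split $\vep_t \le \sum_{0\le\Gamma_i\le t}\sum_{j=1}^{K_i}\1_{t<\Gamma_i+W_{ij}}X_{ij}\1_{X_{ij}\le a_t} + (\text{terms with } X_{ij}>a_t)$. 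For the large-jump part, the expected number of pairs $(i,j)$ with $0\le\Gamma_i\le t$, $\Gamma_i+W_{ij}>t$, and $X_{ij}>a_t$ is at most $\nu t\,\EE[K]\,\PP(X>a_t)\sim \nu\EE[K]/(\EE[K]+1)\cdot (t/n)|_{n=t}$-type quantity; since $t\PP(D>a_t)\to 1$ gives $\PP(X>a_t)\sim 1/((\EE[K]+1)t)$, this expected count converges to the finite constant $\nu\EE[K]/(\EE[K]+1)$ — which is not $o(1)$, so a slightly sharper device is needed.

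The fix, and the main obstacle, is to exploit that a typical large $X_{ij}$ sits in a cluster whose other delays are unrelated to being "near the boundary $t$", so the constraint $\Gamma_i+W_{ij}>t$ with $\Gamma_i\le t$ together with a large claim is genuinely rare after accounting for the marginal delay distribution. The cleanest route: since $(A_{ij})_{j\ge 1}$ is independent of $(K_i,(W_{ij}))$, write the expected number of boundary-crossing large claims as $\nu\,\EE[K]\,\PP(X>a_t)\int_0^t \PP(W>t-s\mid\text{marginal})\,ds / \EE[K]$... i.e. $\nu\PP(X>a_t)\EE\big[\sum_{j=1}^K\int_0^t\1_{W_j>t-s}ds\big] = \nu\PP(X>a_t)\EE\big[\sum_{j=1}^K (W_j\wedge t)\big]$. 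If $\EE[K W]<\infty$ this is $O(\PP(X>a_t)) = O(1/t) = o(1)$; if only $\EE[K]<\infty$, one still has $\EE[\sum_{j=1}^K(W_j\wedge t)] = o(t)$ by dominated convergence (each term $W_j\wedge t = o(t)$ a.s. and is dominated by the summable... ), hence the product is $o(\PP(X>a_t)\cdot t) = o(1)$. Combined with a Karamata/truncation estimate showing the small-jump part $\sum\1_{X_{ij}\le a_t}X_{ij}\1_{t<\Gamma_i+W_{ij}}$ has expectation $\nu\,\EE[\sum_{j=1}^K(W_j\wedge t)]\,\EE[X\1_{X\le a_t}] = o(t)\cdot o(a_t/t)\cdot(\text{slowly varying}) = o(a_t)$ (using $\EE[X\1_{X\le a_t}]\sim \frac{\alpha}{1-\alpha}a_t\PP(X>a_t)\sim \frac{\alpha}{1-\alpha}\frac{a_t}{(\EE[K]+1)t}$ by Karamata), we get $\vep_t = o_P(a_t)$ and conclude via Proposition~\ref{prop:01}. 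I expect the delicate point to be making the $o(t)$ bound on $\EE[\sum_{j=1}^K(W_j\wedge t)]$ and its interaction with the Karamata truncation estimate fully rigorous, particularly handling the conditional dependence of $K$ and the $W_j$'s on $A_{i0}$.
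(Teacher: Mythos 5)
Your proposal is correct, but it takes a genuinely different route from the paper. The paper's proof avoids truncation altogether: it writes $\vep_t$ in distribution as a randomly stopped sum, $\vep_t \stackrel{d}{=} \sum_{j=1}^{M_t}X_j$ with $M_t=\#\{i,j: 0\le \Gamma_i\le t,\ t<\Gamma_i+W_{ij}\}$ independent of the i.i.d.\ claims (this is \eqref{eq:ratios}), notes that $\sum_{j=1}^{M_t}X_j/a_{M_t}$ is tight, and then only needs $M_t=o_P(t)$ together with regular variation of $(a_t)$ with index $1/\alpha$ to get $a_{M_t}/a_t\cip 0$; the bound $\EE[M_t]=\nu\int_0^t\EE[m_A\PP(W>x\mid A)]\,dx=o(t)$ is obtained by a Ces\`aro argument. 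Your argument instead splits $\vep_t$ at the level $a_t$: the expected number of boundary-crossing claims exceeding $a_t$ is $\nu\,\PP(X>a_t)\,\EE\big[\sum_{j=1}^K(W_j\wedge t)\big]=o(1)$, and the truncated part has mean $\nu\,\EE[X\1_{X\le a_t}]\,\EE\big[\sum_{j=1}^K(W_j\wedge t)\big]=o(a_t)$ by Karamata. This is sound: the factorizations are justified by the assumed independence of $(A_{ij})_{j\ge1}$ from $(K_i,(W_{ij}))$, the tail comparison $t\,\PP(X>a_t)\to(1+\EE K)^{-1}$ follows from {\bf (RV1)}, and the step you flagged as delicate is harmless --- $t^{-1}\EE\big[\sum_{j=1}^K(W_j\wedge t)\big]\to0$ by dominated convergence with dominating variable $K$ (integrable), and indeed $\EE\big[\sum_{j=1}^K(W_j\wedge t)\big]=\int_0^t\EE[m_A\PP(W>x\mid A)]\,dx$, so it is exactly the paper's Ces\`aro estimate in disguise; the conditional dependence of $K,(W_j)$ on $A_{i0}$ plays no role in this unconditional expectation. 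In short, both proofs rest on the same core first-moment bound on the number of boundary crossers; the paper's subordination/tightness argument is shorter and dispenses with Karamata, while your big-jump/small-jump decomposition is more elementary and makes the one-large-claim mechanism explicit, at the cost of the extra truncation bookkeeping.
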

\begin{proof}
Observe that one cannot apply Markov inequality anymore because $\EE D=\infty$. Instead, we use the fact that $\sum_{j=1}^tX_j/a_t$ converges because $X$ and $D$ have equivalent regular varying tails. Recall from \eqref{eq:vepminus} that 
  $$\vep_t 
  = \dsum_{0\le \taui \leq t ,  t < \taui+W_{ij}}  X_{ij}.$$
We denote the (increasing) number of summands in the r.h.s. term by $M_t=\# \{i,j:0\le \taui \leq t , t < \taui + W_{ij}\}$. We can apply Proposition \ref{prop:01} after observing that $\sum_{j=1}^{M_t}X_j/a_{M_t}$ is a tight family of random variables, because $M_t$ is independent of the array $(X_{ij})$. 
Writing
\begin{equation}\label{eq:ratios}
\frac{\vep_t}{a_t}\stackrel{d}{=}\frac{\sum_{j=1}^{M_t}X_j}{a_{M_t}}\frac{a_{M_t}}{a_t}\,,
\end{equation}
and observing that $a_t$ is regularly varying with index $1/\alpha$, we obtain the desired result provided  that $M_t=o_P(t)$. It is sufficient to show the convergence to $0$ of the ratio

\begin{align*}
\frac{\EE [M_t]}{t}&=\frac{\EE [\# \{i,j:0\le \taui \leq t , t< \taui + W_{ij}\}]}{t} \\
& =\frac{\EE\left[\sum_{0\le \taui \leq t }\sum_{j=1}^{K_i}\1_{t\le  \taui +W_{ij}}\right]}{t}.
\end{align*}

Using similar calculation as in the proof of Corollary \ref{cor:CLTmix} (setting $X=1$), we obtain an explicit formula for the r.h.s. term as
\[
\frac{\nu  \int_0^t\EE[m_A\PP(W>x\mid A)]dx}{t}\to 0, \qquad t\to \infty,
\]
the convergence to $0$ following from a Cesar\`o argument.
\end{proof}

\subsection{Renewal cluster model}
Recall from subsection \ref{ss:modren} that the clusters of this model have the following form
		$$
		G^{A_i} = \dsum_{j=1}^{K_i} \delta_{T_{ij}, A_{ij}}\,,
		$$
	where 
	\begin{itemize} 
		\item $T_{ij}=W_{i1}+\cdots +W_{ij}$\,,
		\item while $(K_i,(W_{ij})_{j\ge 1},(A_{ij})_{j\ge 0})_{i\ge 0}$ constitutes an i.i.d. sequence satisfying the assumptions listed in Section \ref{sec:mixbin}\,.
	\end{itemize}

The total claim amount coming from the $i$th immigrant and its progeny is again
\[
D \stackrel{d}{=}\sum_{j=0}^K X_j,
\]
for $(X_j)$ i.i.d. copies of $f(A)$. Dealing with the waiting times $T_{ij}=W_{i1}+\cdots +W_{ij}$ requires additional care than in the previous model. We obtain first
\begin{corollary} \label{cor:CLTRenewal} 
Suppose  $\Exp X^2  <\infty,$ $\Exp K^2  <\infty$ and 
$ \Exp [K^{2}W^\delta] < \infty$\, for some $\delta>1/2 $ then the relation \eqref{eq:CLT} holds.
\end{corollary}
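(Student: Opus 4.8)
The plan is to reduce everything to Proposition~\ref{prop:CLT}: two things have to be checked, namely $\EE D^2<\infty$ and $\vep_t=o_P(\sqrt t)$. The moment condition is immediate from the compound-sum formulas recalled at the start of Section~\ref{sec:APP}: since $\EE[X^2]<\infty$ and $\EE[K^2]<\infty$ we get $\EE D^2=(\EE[K]+1)\EE[X^2]+(\EE[K^2]+\EE[K])\EE[X]^2<\infty$. So the real work is entirely in estimating the residue $\vep_t$ from \eqref{e:Soft}.

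For that I would follow the pattern of the proof of Corollary~\ref{cor:CLTmix}: apply Markov's inequality, $\PP(\vep_t>\sqrt t)\le \EE[\vep_t]/\sqrt t$, and evaluate $\EE[\vep_t]$ with Lemma~7.2.12 of \cite{mikosch} applied to the ground Poisson process $N^0$ with the mark-dependent integrand $g(s)=\1_{0\le s\le t}\sum_{j=1}^{K_i}\1_{T_{ij}>t-s}f(A_{ij})$. Using that $(A_{ij})_{j\ge1}$ is independent of $\bigl(K_i,(W_{ij})_{j\ge1}\bigr)$ to pull out a factor $\EE[X]$, and then Tonelli together with $\int_0^t\1_{T_j>u}\,du=T_j\wedge t$, this gives
\[
\EE[\vep_t]=\nu\,\EE[X]\int_0^t\EE\Big[\sum_{j=1}^{K}\1_{T_j>u}\Big]\,du=\nu\,\EE[X]\;\EE\Big[\sum_{j=1}^{K}(T_j\wedge t)\Big].
\]
It then remains to prove $\EE\bigl[\sum_{j=1}^K(T_j\wedge t)\bigr]=o(\sqrt t)$.

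This last bound is the main obstacle, precisely because $\EE\bigl[\sum_{j=1}^K T_j\bigr]$ need not be finite (only a $\delta$-th moment with possibly $\delta<1$ is assumed), so the truncation at level $t$ must be used. I would first reduce to $\delta\in(1/2,1]$ (if $\delta>1$, replace it by $1$, using $W\le 1+W$ and $\EE[K^2]<\infty$). Then $T_j\wedge t\le t^{1-\delta}T_j^{\delta}$, and subadditivity of $x\mapsto x^{\delta}$ gives $T_j^{\delta}\le\sum_{i=1}^{j}W_i^{\delta}$, hence
\[
\sum_{j=1}^K(T_j\wedge t)\le t^{1-\delta}\sum_{j=1}^K\sum_{i=1}^{j}W_i^{\delta}\le t^{1-\delta}\,K\sum_{i=1}^K W_i^{\delta}.
\]
Conditioning on the ancestral mark $A_{i0}$, under which $K_i$ and the $(W_{ij})_{j\ge1}$ are conditionally i.i.d.\ and conditionally independent, yields $\EE\bigl[K\sum_{i=1}^K W_i^{\delta}\bigr]=\EE\bigl[\EE[K^2\mid A_{i0}]\,\EE[W^{\delta}\mid A_{i0}]\bigr]=\EE[K^2W^{\delta}]<\infty$ by hypothesis. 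Consequently $\EE[\vep_t]\le \nu\,\EE[X]\,\EE[K^2W^{\delta}]\,t^{1-\delta}$, which is $o(\sqrt t)$ since $\delta>1/2$; thus $\vep_t/\sqrt t\cip 0$ and Proposition~\ref{prop:CLT} gives \eqref{eq:CLT}.
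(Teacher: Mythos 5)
Your proposal is correct and follows essentially the same route as the paper: reduce to Proposition~\ref{prop:CLT} via the compound-sum moment formula, then bound $\EE[\vep_t]$ through Markov's inequality, Lemma~7.2.12 of \cite{mikosch}, and the conditional independence of $K_i$ and $(W_{ij})_{j\ge1}$ given $A_{i0}$ to arrive at $\EE[\vep_t]=O\bigl(\EE[K^2W^\delta]\,t^{1-\delta}\bigr)=o(\sqrt t)$. The only cosmetic difference is that you integrate first and use $T_j\wedge t\le t^{1-\delta}T_j^\delta$ with subadditivity, whereas the paper first dominates $\sum_{j=1}^{K}\1_{T_j>x}$ by $K\,\1_{T_K>x}$ and then applies a conditional Markov inequality of order $\delta$; both yield the same bound (and your ``$W\le 1+W$'' should read $W\le 1+W^{\delta}$ for the reduction to $\delta\le1$, a harmless slip).
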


\begin{proof}
	The proof follows from Proposition~\ref{prop:CLT}.
	Second moment of $D_i$'s is finite by the moment assumptions on $X$ and  $K$. It remains to show that the residue term satisfies 
	$\vep_t = o_P(\sqrt{t}) $. Using Lemma 7.2.12 of \cite{mikosch} with $f(x) = \sum_{j=1}^{K_i}\1_{W_{i1}+\cdots+W_{ij}> x} f(A_{ij})$ similarly as in the proof of Corollary \ref{cor:CLTmix} we obtain
\begin{align*}
\EE[\vep_t]&= \int_0^t\EE\left[\sum_{j=1}^{K_i}\1_{W_{i1}+\cdots+W_{ij}> x} f(A_{ij})\right]\nu dx \\
& = \nu \int_0^t\EE\left[\EE\left[\sum_{j=1}^{K_i}\1_{W_{i1}+\cdots+W_{ij}> x} f(A_{ij})\mid K_i, (W_{ij})_{j\ge 1}\right]\right] dx \\
& = \nu \EE[X]\int_0^t\EE\left[ \sum_{j=1}^{K_i}\1_{W_{i1}+\cdots+W_{ij}> x}  \right] dx
\end{align*}
by independence between $K_i, (W_{ij})_{j\ge 1}$ and $(f(A_{ij}))_{j\ge 1}$. The key argument in dealing with the renewal cluster model is the  following upper bound
\begin{equation}\label{eq:stochdom}
\sum_{j=1}^{K_i}\1_{W_{i1}+\cdots+W_{ij}>x}\le  \1_{ W_{i1}+\cdots+W_{iK_i}>x} K_i\,.
\end{equation}
Assume with no loss of generality that $\delta \le 1$.
By the Markov inequality and the  conditional  independence of $K_i$ and $(W_{ij})_{j\ge 0}$  conditionally on $A_{i0}$, we obtain

\begin{align}\nonumber
\EE\left[ \1_{W_{i1}+\cdots+W_{iK_i}> x}K_i\mid A_{i0}  \right]&\le \frac{\EE[K_i (W_{i1}+\cdots+W_{iK_i})^\delta\mid A_{i0}]}{x^\delta}\\
\label{eq:markov}
&\le m_{A_{i0}}^{(2)} \frac{\EE[W^\delta\mid A_{i0}]}{x^\delta}\,,
\end{align}
using the notation $m_{A_{i0}}^{(\gamma)}=\EE[K_i^\gamma \mid A_{i0}]$ for any $\gamma>0$.
The last inequality follows from the sub-linearity of the mapping $x\mapsto x^\delta$ for $\delta\le 1$. Thus, we obtain for some constant $C>0$
\[
\EE[\vep_t]\le \nu \EE[X] \EE[m_A^{(2)} W^\delta]\int_1^tx^{-\delta}dx +C=O(\EE[m_A^{(2)} W^\delta]t^{1-\delta})= o(\sqrt t)
\]
as $\delta>1/2$ by assumption.
\end{proof}

Regularly varying claims can be handled with additional care as $K$ may not be square integrable.
 
 \begin{corollary} \label{cor:RVRenewal} 
 Assume that one of the  conditions {\bf (RV)} holds so that  $D$ is regularly varying of order $1<\alpha<2.$ Suppose further that $\EE[K^{1+\gamma}]<\infty$ and $\EE[K^{1+\gamma}W^\delta]<\infty,$ $\delta>0,$ $\gamma >0$ and $ \delta>(\alpha-\gamma)/\alpha$. Then  the relation \eqref{eq:RV1} holds.
 \end{corollary}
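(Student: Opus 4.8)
The plan is to invoke Proposition~\ref{prop:12} via the representation \eqref{e:Soft}, so the only thing requiring proof is that the residue term satisfies $\vep_t = o_P(a_t)$, where $(a_t)$ is regularly varying with index $1/\alpha$ and $t\PP(D>a_t)\to 1$. Since $\alpha>1$ all individual claims have finite mean $\EE[X]<\infty$, so I would again bound $\PP(\vep_t > a_t) \le \EE[\vep_t]/a_t$ by the Markov inequality and compute $\EE[\vep_t]$ by the same application of Lemma~7.2.12 of \cite{mikosch} that was used in Corollary~\ref{cor:CLTRenewal}, obtaining
\[
\EE[\vep_t] = \nu\,\EE[X] \int_0^t \EE\Big[\sum_{j=1}^{K_i}\1_{W_{i1}+\cdots+W_{ij}>x}\Big]\,dx .
\]

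Next I would re-use the key stochastic-domination bound \eqref{eq:stochdom}, namely $\sum_{j=1}^{K_i}\1_{W_{i1}+\cdots+W_{ij}>x} \le \1_{W_{i1}+\cdots+W_{iK_i}>x}\,K_i$, and condition on $A_{i0}$. Here the point is that with only $\EE[K^{1+\gamma}]<\infty$ available (rather than $\EE[K^2]<\infty$), I must distribute the exponent more carefully: I would apply a conditional Markov/Hölder step writing $\1_{W_{i1}+\cdots+W_{iK_i}>x}\,K_i \le K_i \,(W_{i1}+\cdots+W_{iK_i})^{\delta'} x^{-\delta'}$ for a suitable $\delta'$, then use sub-additivity of $u\mapsto u^{\delta'}$ (for $\delta'\le 1$) together with the conditional independence of $K_i$ and $(W_{ij})$ given $A_{i0}$ to get a bound of the form $\mathrm{const}\cdot \EE[K^{1+\gamma}W^{\delta}]\,x^{-\delta'}$. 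The exponents have to be matched so that $\delta' = \delta$ works when $K_i$ and the waiting-time moment are split as $K_i^{1+\gamma}$ times $K_i^{\,\delta/\text{(something)}}$; the hypothesis $\delta > (\alpha-\gamma)/\alpha$ is exactly what is needed to push the resulting power of $t$ below $1/\alpha$.

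Concretely, after integration the bound becomes $\EE[\vep_t] \le C + C'\,\EE[K^{1+\gamma}W^{\delta}]\, t^{1-\delta}$ (assuming w.l.o.g. $\delta\le 1$, handling $\delta>1$ trivially since that case is stronger). Dividing by $a_t$ and using that $(a_t)$ is regularly varying of index $1/\alpha$, so that $t^{1/\alpha-\eta} = o(a_t)$ for every $\eta>0$, I need $1-\delta < 1/\alpha$, i.e. $\delta > 1 - 1/\alpha = (\alpha-1)/\alpha$. The extra slack $\gamma$ in the hypothesis $\delta>(\alpha-\gamma)/\alpha$ compensates for the fact that I spend part of the moment of $W$ (or rather of $K$) to control the $K_i$ factor coming out of \eqref{eq:stochdom} when $K$ is not square-integrable; tracking precisely how $\gamma$ enters the H\"older split is the one bookkeeping point to get right. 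Once $\EE[\vep_t] = o(a_t)$ is established, $\vep_t = o_P(a_t)$ follows and Proposition~\ref{prop:12} yields \eqref{eq:RV1}.

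The main obstacle, as in Corollary~\ref{cor:RVRenewal}'s analogue for the finite-variance case, is purely the moment accounting: choosing the Hölder/Markov exponent on the conditional expectation $\EE[\1_{W_{i1}+\cdots+W_{iK_i}>x}K_i \mid A_{i0}]$ so that the leftover power of $K$ is exactly $1+\gamma$ and the power of $W$ is exactly $\delta$, while simultaneously the decay rate $x^{-\delta}$ integrates to something $o(a_t)$ under the stated constraint on $\delta$. Everything else — the Markov step, Lemma~7.2.12, the conditional independence given $A_{i0}$, and the final appeal to Proposition~\ref{prop:12} — is routine and parallels the earlier corollaries verbatim.
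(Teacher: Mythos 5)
Your overall frame (the representation \eqref{e:Soft}, Proposition~\ref{prop:12}, reduction to $\vep_t=o_P(a_t)$, Lemma~7.2.12 of Mikosch and the domination \eqref{eq:stochdom}) coincides with the paper's, but the central step of your plan does not go through. You bound $\PP(\vep_t>a_t)$ by $\EE[\vep_t]/a_t$ and claim the conditional Markov/sub-additivity step applied to $\1_{W_{i1}+\cdots+W_{iK_i}>x}K_i$ yields a bound of order $\EE[K^{1+\gamma}W^\delta]\,x^{-\delta}$. It cannot: with the factor $K_i$ kept at power one, any such step gives, conditionally on $A_{i0}$, either $\EE[K_i^2\mid A_{i0}]\,\EE[W^\delta\mid A_{i0}]\,x^{-\delta}$ (sub-additivity of $u\mapsto u^\delta$) or $\EE[K_i^{1+\delta}\mid A_{i0}]\,(\EE[W\mid A_{i0}])^\delta x^{-\delta}$ (Jensen), i.e.\ a moment of $K$ of order $2$ or $1+\delta$. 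Under {\bf (RV2)}--{\bf (RV3)}, which are precisely the cases this corollary targets, $K$ is regularly varying with index $\alpha\in(1,2)$, so $\EE[K^2]=\infty$ and typically $\EE[K^{1+\delta}]=\infty$ too (the hypotheses only give $\EE[K^{1+\gamma}]<\infty$, and the constraint $\delta>(\alpha-\gamma)/\alpha$ forces $\delta>\gamma$ whenever $\gamma$ is small); no H\"older split between the $K$ and $W$ factors repairs this under the stated moments. This is exactly the ``additional care as $K$ may not be square integrable'' that distinguishes this corollary from Corollary~\ref{cor:CLTRenewal}, so the first-moment route is a genuine gap, not a bookkeeping issue.

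The paper's fix is to apply the Markov inequality at order $\gamma$ to $\vep_t$ itself: sub-additivity of $x\mapsto x^\gamma$ over the Poisson points, Jensen's inequality to integrate out the marks (producing $\EE[X]^\gamma$), and then \eqref{eq:stochdom} raised to the power $\gamma$, so that only $K_i^\gamma$ stands outside the indicator; the conditional Markov inequality of order $\delta$ then gives exactly $m_{A}^{(1+\gamma)}\EE[W^\delta\mid A]\,x^{-\delta}$, matching the assumed moment $\EE[K^{1+\gamma}W^\delta]<\infty$. The resulting bound $\EE[\vep_t^\gamma]=O(t^{1-\delta})$ is then compared with $a_t^\gamma$, regularly varying of index $\gamma/\alpha$, and this is where the hypothesis $1-\delta<\gamma/\alpha$, i.e.\ $\delta>(\alpha-\gamma)/\alpha$, enters. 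Your exponent check $1-\delta<1/\alpha$ is the one adapted to the order-one Markov bound; the fact that the corollary assumes the stronger $\delta>(\alpha-\gamma)/\alpha$ is itself a signal that the intended argument is the order-$\gamma$ one.
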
 
Observe that we obtain somewhat stronger conditions than in the mixed binomial case, see Corollary~\ref{cor:stable1mix} and remark following it.
 \begin{proof}
 	With no loss of generality we assume that $\gamma\le1$.
We use the Markov inequality of order $\gamma$
\[
\PP(\vep_t>a_t)\le \frac{\EE[\vep_t^\gamma]}{a_t^\gamma}.
\]
Thanks to the the sub-additivity of the function $x\mapsto x^\gamma$ we have
\begin{align*}
\EE[\vep_t^\gamma]&=  \EE\left[\left(\sum_{0\le \Gamma_i\le t}\sum_{j=1}^{K_i}\1_{\Gamma_i+W_{i1}+\cdots+W_{ij}>t}f(A_{ij})\right)^\gamma\right]\\
&\le  \EE\left[\sum_{0\le \Gamma_i\le t}\left(\sum_{j=1}^{K_i}\1_{\Gamma_i+ W_{i1}+\cdots+W_{ij}>t}f(A_{ij})\right)^\gamma\right]
\end{align*}
Using Lemma 7.2.12 of \cite{mikosch} with $f(x) = \sum_{j=1}^{K_i}\1_{W_{i1}+\cdots+W_{ij}>x}f(A_{ij})$ similarly as in the proof of Corollary \ref{cor:CLTmix} we obtain
\begin{equation}\label{eq:vepgamma}
\EE[\vep_t^\gamma]\le\nu \int_0^t\EE\left[ \left(\sum_{j=1}^{K_i}\1_{W_{i1}+\cdots+W_{ij}>x}f(A_{ij})\right)^\gamma\right] dx .
\end{equation}
We use Jensen's inequality as follows
\begin{multline*}
\EE\left[\EE\left[ \left(\sum_{j=1}^{K_i}\1_{W_{i1}+\cdots+W_{ij}>x}f(A_{ij})\right)^\gamma\mid K_i, (W_{ij})_{j\ge 1}\right]\right]\\
\le \EE\left[ \left(\EE\left[\sum_{j=1}^{K_i}\1_{W_{i1}+\cdots+W_{ij}>x}f(A_{ij})\mid K_i, (W_{ij})_{j\ge 1}\right]\right)^\gamma\right]
\end{multline*}
so that, using the independence between $K_i, (W_{ij})_{j\ge 1}$ and $(f(A_{ij}))_{j\ge 1}$, one gets
\[
\EE[\vep_t^\gamma]\le \nu \EE[X]^\gamma\int_0^t\EE\left[\left(\sum_{j=1}^{K_i}\1_{W_{i1}+\cdots+W_{ij}>x}\right)^\gamma\right]dx
\]
Using   the stochastic domination \eqref{eq:stochdom}, we obtain 
\[
\EE[\vep_t^\gamma]\le\nu\EE[X]^\gamma\int_0^t\EE\left[ \1_{ W_{i1}+\cdots+W_{iK_i}>x}K_i^\gamma\right]dx.
\]
With no loss of generality we assume $0<\delta<1$. Applying the Markov inequality of order $\delta$ conditionally on $A_{i0}$ as in \eqref{eq:markov}, we have
\[
\EE[\1_{ W_{i1}+\cdots+W_{iK_i}>x}K_i^\gamma\mid A_{i0}]\le  m_{A_{i0}}^{(1+\gamma)} \frac{\EE[W_i^\delta\mid A_{i0}]}{x^\delta}.
\]
Plugging in this bound in the previous inequality, we obtain for some $C>0$,
\[
\EE[\vep_t^\gamma]\le\nu\EE[X]^\gamma\EE\big[m_{A}^{(1+\gamma)}W^\delta\big]t^{1-\delta}+C =o(a_t^\gamma)
\]
as $1-\delta<\gamma/\alpha$ by assumption.
 \end{proof}

\begin{corollary}
If $X$ is regularly varying of order $\alpha \in (0,1)$ 
then  the relation 
\eqref{eq:RV2} holds.
\end{corollary}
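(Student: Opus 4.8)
The plan is to verify the two hypotheses of Proposition~\ref{prop:01}, namely that $D$ is regularly varying of index $\alpha$ and that the residue satisfies $\vep_t = o_P(a_t)$. For the first point, the standing assumption $\EE[K]<\infty$ together with $\alpha\in(0,1)$ gives $\PP(K>x)\le \EE[K]/x = o(x^{-\alpha})=o(\PP(X>x))$, so the sufficient condition \textbf{(RV1)} applies and $\PP(D>x)\sim(\EE[K]+1)\PP(X>x)$; in particular $D$ is regularly varying of index $\alpha$, a normalizing sequence $(a_n)$ with $nP(D>a_n)\to1$ exists and is, up to a constant, the normalizing sequence of $X$. This is exactly the situation noted before Corollary~\ref{cor:stable2mix}, with the extra care that here the waiting times inside a cluster are the renewal sums $T_{ij}=W_{i1}+\cdots+W_{ij}$.

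For the residue term I would follow the argument of Corollary~\ref{cor:stable2mix} almost verbatim, since Markov's inequality is no longer available ($\EE D=\infty$). Writing $\vep_t=\sum_{0\le\Gamma_i\le t,\ t<\Gamma_i+T_{ij}}X_{ij}$ and observing that the index $j=0$ never contributes (because $T_{i0}=0$ and $\Gamma_i\le t$), the summands are claim sizes $X_{ij}=f(A_{ij})$ with $j\ge1$; under the model assumptions of Section~\ref{sec:mixbin} the array $(X_{ij})_{i\ge1,j\ge1}$ is i.i.d. and independent of the whole family $\{(\Gamma_i,K_i,(W_{ij})_{j\ge1})\}_{i\ge1}$. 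Hence, with $M_t=\#\{(i,j):0\le\Gamma_i\le t,\ 1\le j\le K_i,\ \Gamma_i+T_{ij}>t\}$ the number of summands, $M_t$ is independent of the array and $\vep_t\stackrel{d}{=}\sum_{l=1}^{M_t}\tilde X_l$ with $\tilde X_l\stackrel{d}{=}f(A)$ i.i.d. Then $\vep_t/a_t\stackrel{d}{=}\bigl(\sum_{l=1}^{M_t}\tilde X_l/a_{M_t}\bigr)\,(a_{M_t}/a_t)$; the first factor is a tight family because $M_t$ is independent of $(\tilde X_l)$ (as in Corollary~\ref{cor:stable2mix}), and since $(a_t)$ is regularly varying with index $1/\alpha$ it suffices to prove $M_t=o_P(t)$.

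To establish $M_t=o_P(t)$ I would show the stronger $\EE[M_t]/t\to0$. Expanding $\EE[M_t]$ and applying Lemma~7.2.12 of \cite{mikosch} to the marked Poisson process $N^0$ with marks $(K_i,(W_{ij})_{j\ge1})$ and the functional $f(s)=\sum_{j=1}^{K_i}\1_{W_{i1}+\cdots+W_{ij}>t-s}$, exactly as in the proof of Corollary~\ref{cor:CLTRenewal}, gives $\EE[M_t]=\nu\int_0^t g(x)\,dx$ with $g(x)=\EE\bigl[\sum_{j=1}^{K_i}\1_{T_{ij}>x}\bigr]$. Now $g(x)\to0$ as $x\to\infty$ by dominated convergence: pointwise $\sum_{j=1}^{K_i}\1_{T_{ij}>x}\to0$ since $K_i<\infty$ a.s., and the sum is dominated by $K_i$ with $\EE[K]<\infty$. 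A Cesàro argument then yields $\EE[M_t]/t=\nu\,t^{-1}\int_0^t g(x)\,dx\to0$, which finishes the proof. The only genuinely delicate point — and the thing I would check most carefully — is the independence structure: one must make sure that $M_t$, which is measurable with respect to $\{(\Gamma_i,K_i,(W_{ij}))\}$ alone, is really independent of the i.i.d. claim-size array despite the permitted dependence between $K_i,(W_{ij})$ and the ancestral mark $A_{i0}$, which is fine precisely because $A_{i0}$ never enters $\vep_t$. It is worth remarking, in contrast with Corollaries~\ref{cor:CLTRenewal} and \ref{cor:RVRenewal}, that in this heavy-tailed regime no moment condition on the delays $W$ is needed, $\EE[K]<\infty$ alone driving the Cesàro step, exactly as in the mixed binomial case.
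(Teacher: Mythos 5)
Your proposal is correct and follows essentially the same route as the paper: the ratio decomposition $\vep_t/a_t\stackrel{d}{=}(\sum_{l\le M_t}\tilde X_l/a_{M_t})(a_{M_t}/a_t)$, reduction to $M_t=o_P(t)$, Lemma 7.2.12 of Mikosch, and a dominated-convergence-plus-Ces\`aro argument (the paper inserts the intermediate bound \eqref{eq:stochdom} before dominating by $K_i$, while you apply dominated convergence to $\sum_{j=1}^{K_i}\1_{T_{ij}>x}$ directly, an inessential variant). Your explicit verification via \textbf{(RV1)} that $D$ inherits the regular variation of $X$ matches the paper's remark preceding Corollary~\ref{cor:stable2mix}.
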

\begin{proof}
We use the same arguments as in the proof of Corollary \ref{cor:stable2mix} in order to obtain \eqref{eq:ratios}. The desired result follows if one can show that $M_t=\# \{i,j:0\le \taui \leq t , t < \taui + W_{i1} + \cdots + W_{ij}\} = o_P(t)$. Using the Markov's inequality, it is enough to check that $\EE[M_t]/t=o(1)$. Following the same reasoning than in the proof of Corollary \ref{cor:RVRenewal}, we estimate the moment  of $M_t$ similarly as the one of $\vep_t$ in \eqref{eq:vepgamma}: 
\[
\EE[M_t]\le  \int_0^t\EE\left[  \sum_{j=1}^{K_i}\1_{W_{i1}+\cdots+W_{ij}>x}  \right]\nu dx\le  \int_0^t\EE\left[  K_i \1_{W_{i1}+\cdots+W_{iK_i}>x}  \right]\nu dx.
\]
We used again the stochastic domination \eqref{eq:stochdom} to obtain the last upper bound. From a Cesar\'o argument, the result will follow if 
\[
\EE\left[  K_i \1_{W_{i1}+\cdots+W_{iK_i}>x}  \right] \to 0,\qquad x\to \infty.
\]
One can actually check this negligibility property because the random sequence  $ K_i \1_{W_{i1}+\cdots+W_{iK_i}>x} \to 0$ a.s. by finiteness of $W_{i1}+\cdots+W_{iK_i}$ and because the sequence is dominated by $K_i$ that is integrable.
\end{proof}
 
\subsection{Marked Hawkes process}

Recall from Section \ref{ss:modhaw} that the clusters of the Hawkes model satisfy recursive relation \eqref{GA}. In other words, the clusters $G^{A_i}$ represent a recursive  aggregation of Poisson processes with random mean measure $ \tilde{\mu}_A  \times Q$\, which satisfies $\kappa =  \Exp \int h(s,A) ds < 1$.

In general, it is not entirely straightforward to see when the moments of $D$ are finite. However, note that $D_i$'s are i.i.d. and satisfy distributional equation 
\begin{equation}\label{eq:distD}
D \eind f(A) + \dsum_{j=1}^{L_A} D_j \,,
\end{equation}
where $L_A$ has the Poisson distribution conditionally on $A$, with
mean $\kappa_A=\int_0^\infty h(s,A) ds.$ Recall from (\ref{e:kappa}) that $\kappa = \Exp \kappa_A < 1.$ The $D_j$'s on the right hand side are independent of $\kappa_A$ and i.i.d. with  the same distribution as $D$. Conditionally on $A$, the waiting times are i.i.d. with common density
$h(t,A)/\kappa_A $, $t\ge0$. Thus, one can relate the clusters of the Hawkes process with those of a mixed binomial  process from Section \ref{sec:mixbin} with $K=L_A$. In order to obtain the asymptotic properties of $S(t)$ one still needs to characterize the moment and tail properties of $D$.\\

Consider  the Laplace transform of $D$, i.e.
$\varphi(s) = \Exp e^{-sD}$, for  $s \geq 0$. 
Also, recall the Laplace transform of a Poisson compound sum is of the form 
$$ \Exp \left[ 
		e^{-s \sum_{j=1}^{M}Z_j} \right] = \Exp \left[  e^{m_A \left( \Exp e^{-sZ} - 1\right)}  \right],
$$
where $M$ is Poiss$(m_A)$ distributed, independent of the i.i.d. sequence $(Z_i)$ of nonnegative random variables with common distribution (see, for instance, Section 7.2.2 in \cite{mikosch}).
Note, $\varphi$ is an infinitely  differentiable function  for  $s>0$. 
To simplify the notation, 
denote by
\[
X= f(A)\,,
\] a generic claim size and observe that  by \eqref{eq:distD}, $\varphi$ satisfies the following
\begin{align}
	 \varphi(s) & =  \Exp \left[ 
		\Exp\left( e^{-s(X + \sum_{j=1}^{L_A}D_j)}
		\bigg|\,  A
		\right)\right]  =
		\Exp \left[ 
		e^{-sX} \Exp\left(e^{ -s \sum_{j=1}^{L_A}D_j}
		\bigg|\,  A
		\right)\right] \nonumber \\
	& =  \Exp \left[ 
	e^{-sX} e^{\kappa_A  (\Exp e^{-sD} -1) } \right] 
	= \Exp\left[ 
	e^{-sX} e^{\kappa_A  (\varphi(s) -1) } \right]\,.
	\label{e:varp}
\end{align}
When $\Exp [\kappa_A ]=\kappa<1$,
it is known that this functional equation has  a unique  solution $\varphi$ which further  uniquely determines the distribution of $D$.
By studying the behaviour of the derivatives of $\varphi(s)$ for $s\to 0+$, we get the following result.
\begin{lemma} \label{lm:1}
	If $\Exp X^2 <\infty$ and $\Exp \kappa_A^2<\infty$ then 
	\[
	\Exp D^2=\frac{\Exp {X^2}}{1-\kappa}+\frac{(\Exp{X})^2}{(1-\kappa)^3}\Exp \kappa_A ^2+2\frac{\Exp X}{(1-\kappa)^2}\Exp (X\kappa_A )<\infty\,.
	\]
\end{lemma}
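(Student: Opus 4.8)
The plan is to exploit the functional equation \eqref{e:varp} for the Laplace transform $\varphi(s)=\Exp e^{-sD}$ and extract the first two moments of $D$ from the behaviour of $\varphi$ near $s=0^+$. Recall that for a nonnegative random variable with $\Exp D^2<\infty$ one has the expansion $\varphi(s) = 1 - s\,\Exp D + \tfrac{s^2}{2}\,\Exp D^2 + o(s^2)$ as $s\to 0^+$, equivalently $\varphi'(0^+) = -\Exp D$ and $\varphi''(0^+) = \Exp D^2$; so it suffices to justify that $\varphi$ is twice differentiable at $0$ under the stated moment hypotheses and to compute $\varphi'(0^+)$ and $\varphi''(0^+)$ by differentiating \eqref{e:varp}. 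First I would recall (or briefly re-derive) the first-moment identity $\mu_D = \Exp D = \Exp X/(1-\kappa)$: differentiating $\varphi(s)=\Exp[e^{-sX}e^{\kappa_A(\varphi(s)-1)}]$ once, evaluating at $s=0$ (where $\varphi(0)=1$), gives $\varphi'(0) = \Exp[-X + \kappa_A\varphi'(0)] = -\Exp X + \kappa\,\varphi'(0)$, hence $-\Exp D = \varphi'(0) = -\Exp X/(1-\kappa)$; this matches $\Exp D=(1+\Exp L_A)\Exp X$ with $\Exp L_A = \kappa/(1-\kappa)$.

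Next I would differentiate \eqref{e:varp} a second time. Writing $\psi(s) = e^{-sX}e^{\kappa_A(\varphi(s)-1)}$ for the integrand (a function of $s$ and $A$), one computes $\partial_s\psi = \psi\,(-X + \kappa_A\varphi'(s))$ and $\partial_s^2\psi = \psi\big((-X+\kappa_A\varphi'(s))^2 + \kappa_A\varphi''(s)\big)$. Taking expectations and evaluating at $s=0$ yields
\[
\varphi''(0) = \Exp\!\big[(-X+\kappa_A\varphi'(0))^2\big] + \Exp[\kappa_A]\,\varphi''(0).
\]
Substituting $\varphi'(0) = -\Exp X/(1-\kappa)$ and expanding the square gives $\Exp[X^2] - 2\tfrac{\Exp X}{1-\kappa}\Exp[X\kappa_A] + \tfrac{(\Exp X)^2}{(1-\kappa)^2}\Exp[\kappa_A^2]$ for the first term, and solving the linear equation $\varphi''(0)(1-\kappa) = (\text{that expression})$ produces exactly the claimed formula for $\Exp D^2 = \varphi''(0)$, after dividing through by $1-\kappa$. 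I would present this computation compactly since it is routine once the differentiation is set up.

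The main obstacle is rigour in the differentiation under the expectation sign and in identifying $\varphi'(0^+)$, $\varphi''(0^+)$ with the moments: one must justify differentiating $\Exp[\psi(s,A)]$ twice by dominated convergence, and argue that the resulting candidate value is finite and genuinely equals $\Exp D$, resp.\ $\Exp D^2$, rather than leaving it as a formal identity. For this I would proceed in two stages. First, on the event-free side: since $D\ge 0$, $\varphi$ is smooth on $(0,\infty)$ (already noted in the excerpt), so the first two derivatives exist there; the identities above hold for all $s>0$ with $\varphi$, $\varphi'$, $\varphi''$ in place of their values at $0$, and one lets $s\downarrow 0$. The finiteness of $\Exp D^2$ is obtained self-consistently: from $\varphi''(s) \le \big(\Exp[(-X+\kappa_A\varphi'(s))^2] \big)/(1-\Exp\kappa_A)$ for $s>0$, together with $\varphi'(s)\uparrow\varphi'(0)=-\Exp X/(1-\kappa)$ (monotone since $D\ge0$) and the hypotheses $\Exp X^2<\infty$, $\Exp\kappa_A^2<\infty$ (which control $\Exp[X\kappa_A]$ by Cauchy--Schwarz), one gets $\limsup_{s\downarrow0}\varphi''(s)<\infty$; a monotone/Fatou argument on $\varphi''(s) = \Exp[e^{-sD}D^2]$ then gives $\Exp D^2 = \lim_{s\downarrow0}\varphi''(s)<\infty$ and equality with the formula. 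Alternatively, and perhaps more cleanly, one can bypass Laplace transforms entirely: condition on $A$, use the classical compound-Poisson second-moment formula for $\sum_{j=1}^{L_A}D_j$ given $A$ (namely conditional mean $\kappa_A\Exp D$ and conditional variance $\kappa_A\Exp D^2$, since $L_A$ is conditionally Poisson($\kappa_A$) and the $D_j$ are independent of $\kappa_A$), expand $\Exp D^2 = \Exp[(X+\sum_{j=1}^{L_A}D_j)^2]$ using independence of the $D_j$ from $(X,\kappa_A)$, and solve the resulting linear equation in $\Exp D^2$. I would likely include this as the primary argument and mention the Laplace-transform route as the motivation, since it makes the finiteness claim transparent and avoids any differentiation-under-the-integral technicalities.
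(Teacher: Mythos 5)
Your Laplace-transform argument is essentially the paper's own proof: differentiate the fixed-point equation \eqref{e:varp} twice for $s>0$, solve for $\varphi'(s)$ and $\varphi''(s)$ using that the denominator $1-\Exp[e^{-sX}e^{\kappa_A(\varphi(s)-1)}\kappa_A]\ge 1-\kappa>0$, and let $s\downarrow 0$ by dominated convergence (domination by $X$, $\kappa_A$ and $(X+\kappa_A\Exp D)^2$, the last integrable by Cauchy--Schwarz), identifying the limits with $-\Exp D$ and $\Exp D^2$ via monotone convergence; your uniform bound $\varphi''(s)\le \Exp[(-X+\kappa_A\varphi'(s))^2]/(1-\kappa)$ plus monotonicity of $\Exp[e^{-sD}D^2]$ handles finiteness correctly. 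Two caveats. First, a sign slip: since $\varphi'(0)=-\Exp X/(1-\kappa)<0$, expanding $\Exp[(-X+\kappa_A\varphi'(0))^2]$ gives the cross term $+2\tfrac{\Exp X}{1-\kappa}\Exp[X\kappa_A]$, not minus (and $\varphi'(s)$ decreases, rather than increases, to $\varphi'(0^+)$ as $s\downarrow 0$); with the correct sign you do recover the stated formula. Second, the ``cleaner'' compound-Poisson route you propose to promote to the primary argument does \emph{not} make finiteness transparent: expanding $\Exp D^2=\Exp[(X+\sum_{j=1}^{L_A}D_j)^2]$ and ``solving the linear equation'' presupposes $\Exp D^2<\infty$, since when $\Exp D^2=\infty$ both sides of the identity are infinite and the equation is uninformative. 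If you want that route as the main proof, first establish finiteness, e.g.\ by truncating at generation $n$: with $D^{(n+1)}\eind X+\sum_{j=1}^{L_A}D^{(n)}_j$ one gets $\Exp[(D^{(n+1)})^2]\le C+\kappa\,\Exp[(D^{(n)})^2]$ with $C$ finite under the stated moment assumptions, hence $\sup_n\Exp[(D^{(n)})^2]<\infty$, and then pass to the limit by monotone convergence; alternatively keep the Laplace-transform argument, as the paper does, where finiteness falls out of the $s\downarrow 0$ limit.
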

Notice that this expression  coincides with the expression in \cite{zhu}, when $X=f(A)\equiv 1$,
i.e. in the case when one simply counts the number of claims.
\begin{proof}
	Differentiating the equation \eqref{e:varp}   with respect to $s>0$ produces
\[
\varphi'(s) = 
\Exp \left[ 
e^{-sX} e^{\kappa_A  (\varphi(s) -1) }  
\left(  -X + \kappa_A  \varphi' (s) \right)\right]\,.
\]
As $\Exp(\kappa_A)=\kappa<1$ we obtain
\begin{equation}\label{eq:idl1}
\varphi'(s) = \frac{-\Exp \left[ 
	e^{-sX} e^{\mu (\varphi(s) -1) }  
	X  \right]}{1-\Exp\left[ \kappa_A ^{-sX} e^{\kappa_A  (\varphi(s) -1) }\kappa_A \right] }
\end{equation}	
	As $\varphi(s)\le 1$, $s\ge 0$, the  integrand in the numerator is dominated by $X$ and the one in the denominator by $\kappa_A $. By the dominated convergence argument, $\lim_{s\to 0+}\varphi'(s)$ exists and is equal to
	\[
	\varphi'(0) = \dfrac{- \Exp X}{1-\kappa}\,.
	\] 
	In particular $\Exp D = \Exp X/(1-\kappa)$. 
	Differentiating \eqref{e:varp}  again produces second moment of $D$. Indeed, we have
	\[
	\varphi''(s) = 
	\Exp \left[ 
	e^{-sX} e^{\kappa_A  (\varphi(s) -1) }  
	\left(\left(  -X + \kappa_A  \varphi' (s) \right)^2 + \kappa_A  \varphi'' (s) \right)\right]
	\,,
	\]
	so that
\begin{equation}\label{eq:idl2}
	\varphi''(s) = 
\frac{\Exp \left[ 
	e^{-sX} e^{\kappa_A  (\varphi(s) -1) }  
	\left(  -X + \kappa_A  \varphi' (s) \right)^2\right]}{1-\Exp[e^{-sX} e^{\kappa_A  (\varphi(s) -1) }\kappa_A ]}.
\end{equation}

	Here again, applying the dominated convergence theorem twice, one can let $s\to 0+$ and obtain
	\[
	\varphi''(0) = \dfrac{\Exp \left(  -X + \kappa_A  \varphi' (0) \right)^2}{1-\kappa}=\dfrac{\Exp \left(  X + \kappa_A  \Exp D \right)^2}{1-\kappa}\,.
	\]
	Which concludes the proof since  $X=f(A)$.
\end{proof}

The following theorem describes the behavior of the total claim amount $(S(t))$ for the marked Hawkes process under appropriate 2nd moment assumptions. Recall from \eqref{e:muA} that 
$
\tilde{\mu}_{A} (B) = \int_B h(s, A) ds\,.
$

\begin{thm} \label{thm:CLTHawkes}
	If $\kappa<1$,  $\Exp X^2 <\infty$ and $\Exp[ \kappa_A ^2]<\infty$ then, in either stationary or nonstationary case, if
	\begin{align}\label{eq:condh3}
		\sqrt t  \Exp[\tilde{\mu}_A(t,\infty)]\to 0,\qquad t\to \infty,
	\end{align}
then the relation 
\eqref{eq:CLT} holds.
\end{thm}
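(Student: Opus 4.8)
The plan is to derive Theorem~\ref{thm:CLTHawkes} as an application of Proposition~\ref{prop:CLT}, exactly as Corollaries~\ref{cor:CLTmix} and \ref{cor:CLTRenewal} were derived for the other two models. Two things must be checked: first, that $\Exp D^2<\infty$; second, that the residue term satisfies $\vep_t=o_P(\sqrt t)$ (and, in the stationary case, that the additional term $\tilde\vep_t$ of \eqref{eq:adas} is also $o_P(\sqrt t)$). The first point is immediate from Lemma~\ref{lm:1}: under $\Exp X^2<\infty$ and $\Exp\kappa_A^2<\infty$ we already have an explicit finite formula for $\Exp D^2$, and by Proposition~\ref{prop:CLT} the centering and scaling in \eqref{eq:CLT} are then legitimate. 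So the real content is the analysis of the residue.

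For the residue, I would again use the Markov inequality, $\PP(\vep_t>\sqrt t)\le \Exp[\vep_t]/\sqrt t$, and compute $\Exp[\vep_t]$ by conditioning on the ancestral Poisson process and using Lemma 7.2.12 of \cite{mikosch}, just as in the proof of Corollary~\ref{cor:CLTmix}. This reduces $\Exp[\vep_t]$ to $\nu$ times an integral over $s\in[0,t]$ of the expected total claim amount in a single cluster that lands to the right of $t-s$. Here the key structural fact is the distributional fixed-point equation \eqref{eq:distD}: a Hawkes cluster is a Poisson compound sum of i.i.d.\ sub-clusters, with first-generation offspring arriving according to the conditionally Poisson measure $\tilde\mu_A$. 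I would therefore set up a renewal-type identity for the function $g(x)=\Exp\bigl[\sum_{j}\1_{T_{ij}>x}X_{ij}\bigr]$ (the expected claim mass in a cluster arriving after time $x$), differentiate or iterate the fixed-point relation to show that $g$ inherits the decay of the tail $\Exp[\tilde\mu_A(x,\infty)]$, i.e.\ $g(x)=O\bigl(\Exp[\tilde\mu_A(x,\infty)]\bigr)$ up to a multiplicative constant depending on $\kappa$, $\Exp X$ and $\Exp D$. Granting this, $\Exp[\vep_t]\le C\int_0^t \Exp[\tilde\mu_A(t-s,\infty)]\,ds = C\int_0^t\Exp[\tilde\mu_A(x,\infty)]\,dx$, and L'H\^opital's rule together with \eqref{eq:condh3} gives $\Exp[\vep_t]/\sqrt t\to 0$, hence $\vep_t=o_P(\sqrt t)$. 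The stationary case follows from Remark~\ref{statvers}: one checks \eqref{eq:adas} by the same bound applied to clusters whose ancestor sits before $-t$, which forces at least one offspring delay to exceed $t$, so the contribution is controlled by $\Exp[\tilde\mu_A(t,\infty)]$ times $\Exp D$ and is negligible under \eqref{eq:condh3}; alternatively invoke the stochastic comparison $\vep^*_{0,t}\le \vep_t^-+\tilde\vep_t$ and the stationarity identity \eqref{eq:vepminus}.

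The main obstacle is the cluster-level tail estimate $g(x)=O\bigl(\Exp[\tilde\mu_A(x,\infty)]\bigr)$: unlike the mixed binomial and renewal models, where the offspring delays have an explicit i.i.d.\ or renewal structure, in the Hawkes cascade an offspring may appear in the $k$th generation, so its arrival time is a random sum of delays across generations. One must show that the cascade structure does not fatten the tail of the delay of a \emph{typical} claim beyond that of a first-generation delay. The clean way is to condition on $A$ and write $g$ via the recursion $g(x)=\Exp\bigl[\int_0^\infty h(s,A)\,(X\text{-part}+g(x-s)^{+})\,ds\bigr]$ coming from \eqref{GA}–\eqref{eq:distD}: the ``direct'' term is $\Exp X\cdot \Exp[\tilde\mu_A(x,\infty)]$ and the ``recursive'' term is a convolution of $g$ with the (sub-probability, total mass $\kappa<1$) kernel $\Exp[h(s,A)ds]$, so $g\le \Exp X\cdot \Exp[\tilde\mu_A(\cdot,\infty)] * \sum_{k\ge0}(\Exp[h\,ds])^{*k}$, and subcriticality $\kappa<1$ makes the geometric series of kernels a finite measure, yielding $g(x)\le \frac{\Exp X}{1-\kappa}\,\bar H(x)$ with $\bar H$ a tail dominated (after integration) by $\int_0^\infty \Exp[\tilde\mu_A(x,\infty)]dx$-type quantities; monotonicity of $\bar H$ and \eqref{eq:condh3} then close the argument via L'H\^opital as above. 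I would present this convolution-domination step carefully, since it is the one place where the Poisson cluster (branching) representation of the Hawkes process is genuinely used, and it is what allows the very weak moment-free condition \eqref{eq:condh3} on $h$.
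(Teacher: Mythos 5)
Your nonstationary argument is essentially sound and lands on the same estimate as the paper, namely $\Exp[\vep_t]\le C\int_0^t \Exp[\tilde{\mu}_A(x,\infty)]\,dx$ followed by L'H\^opital under \eqref{eq:condh3}; the paper reaches this bound (see \eqref{eq:domH}) by a different route — it rewrites the residue as a sum over parent--child pairs $\tau_i\to\tau_j$ with $\tau_i\le t<\tau_j$, so each crossing child contributes $\mu_D$ in expectation, and then invokes the projection theorem and the intensity domination $\lambda\le\lambda^*$ with $\Exp\lambda^*=\nu/(1-\kappa)$, whereas you work at the cluster level via Campbell's formula and a renewal equation for $g(x)=\Exp\bigl[\sum_j \1_{T_j>x}X_j\bigr]$. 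That route is legitimate, but your intermediate pointwise claim $g(x)=O\bigl(\Exp[\tilde{\mu}_A(x,\infty)]\bigr)$ is false in general: with $\bar H(x)=\Exp[\tilde{\mu}_A(x,\infty)]$ and $H(ds)=\Exp[h(s,A)]\,ds$ of total mass $\kappa$, the renewal equation gives $g=\mu_D\,\bar H*\sum_{k\ge0}H^{*k}$, and the convolution can decay strictly more slowly than $\bar H$ (e.g.\ for compactly supported $h$ one has $\bar H(x)=0$ for large $x$ while $g(x)>0$). What survives, and is all you need, is the integrated bound $\int_0^t g(x)\,dx\le \frac{\mu_D}{1-\kappa}\int_0^t\bar H(x)\,dx$, obtained by Tonelli from the convolution series; state the domination in that form (also, the direct term in your recursion should carry $\mu_D$ rather than $\Exp X$, though this only affects the constant).

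The genuine gap is the stationary half of the theorem. The assertion that a claim landing in $(0,t)$ whose immigrant ancestor lies before $-t$ ``forces at least one offspring delay to exceed $t$'' is wrong for the Hawkes cascade: such a claim has total lag $T_{ij}>t$, but this lag is a sum of delays across generations, each of which may be small, so its probability is not controlled by $\Exp[\tilde{\mu}_A(t,\infty)]$ alone in the way you claim. Invoking $\vep^*_{0,t}\le\vep^-_t+\tilde\vep_t$ and \eqref{eq:vepminus} is only the reduction of Remark~\ref{statvers}; the additional condition \eqref{eq:adas} still has to be verified, and that is precisely the nontrivial step. The paper instead computes the expectation of the extra term $\vep_{0,t}$ in \eqref{e:SoftStat} directly through the projection theorem for $N^*$, arriving at $\frac{\nu\mu_D}{1-\kappa}\bigl(\int_0^t\Exp[u\,h(u,A)]\,du+t\,\Exp[\tilde{\mu}_A(t,\infty)]\bigr)$, and disposes of the first term by an integration by parts plus L'H\^opital under \eqref{eq:condh3}. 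Within your framework a correct substitute is $\Exp\tilde\vep_t\le\nu\int_t^\infty\bigl(g(u)-g(u+t)\bigr)du\le\nu\int_t^{2t}g(u)\,du\le\frac{\nu\mu_D}{1-\kappa}\int_0^{2t}\bar H(x)\,dx=o(\sqrt t)$; some such argument must be supplied, since as written your treatment of the stationary case does not go through.
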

\begin{proof}
In order to apply Proposition \ref{prop:CLT} one has to check that $\vep_t = o_P(\sqrt t)$.
The proof is based on the following  domination argument on $\vep_t$. Recall that one can write
  $$
  N = \dsum_i \dsum_j \delta_{\taui + T_{ij},A_{ij}} = \dsum_{k=1}^\infty \delta_{\tau_k,A^{k}} \,,
  $$
  w.l.o.g. assuming that $0\leq \tau_1\leq \tau_2\leq \ldots$.
  At each time $\tau_j$, a claim arrives  generated by one of the previous claims or
  an entirely new (immigrant) claim  appears.
  In the former case, if $\tau_j$ is a direct offspring of a claim at time $\tau_i$, we will write
  $\tau_i\to \tau_j$. Progeny $\tau_j$ then creates potentially further  claims. We denote by $D_{\tau_j}$ the total amount of claims generated by the arrival at $\tau_j$ (counting the claim at $\tau_j$ itself as well). Clearly, $D_{\tau_j}$'s are identically distributed as $D$ and even independent 
  if we consider claims which are not offspring of one another. They are also 
  independent of everything happening in the past.

The process $N$ is naturally dominated by
   the stationary marked Hawkes process $N^*$ which is well defined on the whole real line as we assumed  
   $\kappa = \Exp \kappa_A  <1$, see discussion at the end of Subsection~\ref{ss:Mod}. For the original and stationary Hawkes processes, $N$ and $N^*$, by
    $\lambda$ and $ \lambda^*$, we denote corresponding predictable intensities. By the construction of these two point processes,
    $\lambda \leq \lambda^*$. Recall that 
   $\tau_i\to \tau_j$ is equivalent to $\tau_j=\tau_i+W_{ik}$, $k\le L^i = L_{A^i}$, where, by assumption, $W_{ik}$ are i.i.d. with common density $h(t,A^i)/ \kappa_{A^i}$, $t\ge0$, and independent of $L^{i}$ conditionally on the mark $A^i$ of the claim at $\tau_i$. Moreover,
   conditionally on $A^i$, the number of direct progeny of the claim at $\tau_i$, denoted  by $L^{i}$, has Poisson distribution with parameter $\tilde{\mu}_{A^i}$. Therefore, 
using conditional independence and equal distribution of $D^{'}$s we get
  \begin{align*}
\Exp[\vep_t] &=  \Exp\Big[ \dsum_{\taui \leq t}\dsum_j \1_{\taui + T_{ij} >t}  X_{ij}\Big]\\
   &= \Exp\Big[\dsum_{\tau_i \leq t} \dsum_{\tau_j > t}  D_{\tau_j}  \1_{\tau_i\to \tau_j}\Big]\\
   &=\Exp\Big[\dsum_{\tau_i \leq t} \EE\Big[\dsum_{k=1}^{L^{i}} D_{\tau_i+W_{ik}}  \1_{\tau_i+W_{ik} > t}  \mid (\tau_i,A^i)_{i\ge 0}; \tau_i \le t\Big]\Big]\\
   &= \mu_D  
   \Exp  \left[ \dint_{0}^t \dint_{\mathbb{S}} \tilde{\mu}_a((t-s,\infty)) N(ds,da)  \right]\,,
\end{align*}
where $ \tilde{\mu}_{a} ((u,\infty)) = \int_u^\infty h(s, a) ds$.
Observe that  from 
projection theorem, see \cite{brem}, Chapter 8, Theorem 3, the last expression equals to
\[
 \mu_D \EE\left[\dint_0^t \dint_{\mathbb{S}} \tilde{\mu}_a((t-s,\infty)) Q(da) \lambda(s)ds   \right]\,,
\]

One can further bound this estimate by
\begin{align*}
\EE\left[\dint_0^t \dint_{\mathbb{S}} \tilde{\mu}_a((t-s,\infty)) Q(da) \lambda^*(s)ds\right]&=\dint_0^t \dint_{\mathbb{S}} \tilde{\mu}_a((t-s,\infty)) Q(da) \EE[\lambda^*(s)]ds\\
&=\frac{\nu}{1-\kappa}\dint_0^t \dint_{\mathbb{S}} \tilde{\mu}_a((t-s,\infty)) Q(da) ds
\end{align*}

Here we used Fubini's theorem, and the  expression   $\Exp  \left[ \lambda^*(s)   \right] \equiv \nu /(1-\kappa)$. Observe that this expectation is  constant  since $N^*$ is a stationary point process, to show that it equals $ \nu/(1-\kappa)$, note that


	\begin{eqnarray*}
		\lefteqn{ {\mu}^* = \Exp { \lambda^*(s)}
			= \Exp \left[ \nu + \dint_{-\infty}^s \dint_{\mathbb{S}} h(s-u,a) N^*(du,da) \right]}\\
		& = & \nu + \dint_{-\infty}^s \dint_{\mathbb{S}} h(s-u,a) \Exp({\lambda}^*(u))duQ(da)\\ 
		& = & \nu + {\mu}^* \dint_{-\infty}^s \EE h(s-u,A)du \\ 
		& = & \nu + {\mu}^*\dint_0^{\infty}\Exp h(v,A)dv\,, 
	\end{eqnarray*} 
	see also \cite{daley}, Example 6.3(c).
	Hence, ${\mu}^* = \nu + {\mu}^* \cdot \kappa$ and $ {\mu}^* = {\nu}/({1-\kappa})$  as we claimed above.
	Now, we have 
	\begin{equation}\label{eq:domH}
	\Exp {\vep_t} \leq \dfrac{\nu  }{1-\kappa}
	\dint_0^t \dint_{\mathbb{S}} \tilde{\mu}_a((t-s,\infty)) Q(da)ds
	= \dfrac{\nu  }{1-\kappa}
	\dint_0^t \mu_D \int_s^\infty \Exp[h(u,A)]du  ds \,.
	\end{equation}
 Hence the residual term is bounded in expectation by the 
 expression we obtained in the mixed binomial case in Section \ref{sec:mixbin}. Thus, the result will follow from the proof of Corollary \ref{cor:CLTmix} under the condition \eqref{eq:condCLTmix} which is further equivalent to  \eqref{eq:condh3} thanks to the expression of the density of the waiting times.

	Dividing the last expression  by $\sqrt t$ and applying L'H\^opital's rule,  proves the theorem for the nonstationary or pure Hawkes
	process, see \cite{zhu} where the same idea appears in the proof of Theorem 1.3.2. 
	
	To show that the the central limit theorem holds in the stationary case, note that   $S(t)$ now has a similar representation as in \eqref{e:Soft} but with an additional term on the right hand side, i.e.
	\begin{equation} \label{e:SoftStat}
	S(t) = \sum_{i=1}^{\tau(t)} D_i 
	- D_{\tau(t)}  - \vep_t +\vep_{0,t}\,,\ t \geq 0\,,
	\end{equation}
	where 
	$$
	\vep_{0,t} = \dsum_{\Gamma_i \leq 0 ,\  0 < \Gamma_i+T_{ij}<t}  X_{ij} \, .
	$$

	Similar computation provides 
	\begin{eqnarray*}
		\lefteqn{ \Exp {\vep_{0,t}}= \Exp \dsum_{\Gamma_i \leq 0}
			\dsum_j \1_{0<\Gamma_i + T_{ij} <t} X_{ij}
			= \Exp \dsum_{\tau_i \leq 0} \dsum_{0< \tau_j<t}  D_{\tau_j}  \1_{\tau_i\to \tau_j}}\\
		&= & 
		\Exp \left[  \dsum_{\tau_i \leq 0} \mu_D
		\Exp \left(  \dsum_{0<\tau_j<t}  \1_{\tau_i\to \tau_j}
		\bigg|\, \mathcal{F}_0 \right)
		\right]   \\
		& = & \mu_D
		\Exp  \left[  \dsum_{\tau_i \leq 0}   \tilde{\mu}_{A^{i}}((0-\tau_i,t-\tau_i)) \right]   \\ 
		& = & \mu_D  
		\Exp  \left[ \dint_{-\infty}^0 \dint_{\mathbb{S}} \tilde{\mu}_a((-s,t-s)) N^*(ds,da)  \right].  
	\end{eqnarray*} 
	where we denote $
	\tilde{\mu}_{a} (B) = \int_B h(s, a) ds\,$
and $\mathcal{F}_0$ stands for the internal history of the process up to time $0,$ i.e. $\mathcal{F}_0 = \sigma\{ N(I \times S): I \in \mathcal{B}(\mathbb{R}), I \subset (-\infty, 0], S \in \mathcal{S}\}.$ 
Again, by the projection theorem, see \cite{brem}, Chapter 8, Theorem 3, the last expression equals to
\begin{eqnarray*}	
		\mu_D
		\Exp  \left[ \dint_{-\infty}^0 \dint_{\mathbb{S}} \tilde{\mu}_a((-s,t-s)) \lambda^*(s)dsQ(da)  \right].
	\end{eqnarray*} 
Which is further equal to
\begin{eqnarray*}		
		\lefteqn{\mu_D
		\dint_{-\infty}^0 \dint_{\mathbb{S}} \tilde{\mu}_a((-s,t-s)) \Exp \left[   \lambda^*(s) \right] dsQ(da)} \\
		& = & 
		\mu_D \dfrac{\nu}{1-\kappa} 
		\dint_{-\infty}^0 \dint_{\mathbb{S}} \tilde{\mu}_a((-s,t-s)) dsQ(da) \\
		& = & 
		\mu_D \dfrac{\nu}{1-\kappa} 
		\dint_{-\infty}^0 \Exp \tilde{\mu}_A((-s,t-s))  ds \\
		& = & 
		\mu_D \dfrac{\nu}{1-\kappa} 
		\dint_{0}^{\infty} \Exp \tilde{\mu}_A((s,s+t))  ds \\
		& = &  \mu_D \dfrac{\nu}{1-\kappa} 
		\dint_0^{\infty} \Exp \dint_s^{s+t} h(u,A)du   ds\\
		& = & 
		\mu_D\dfrac{\nu}{1-\kappa} 
		\dint_0^{\infty} \Exp (t\wedge u) h(u,A) du\\
		&=&  \mu_D\dfrac{\nu}{1-\kappa} 
		\left( \dint_0^{t} \Exp [ u h(u,A) ] du+t\dint_t^{\infty} \Exp [ h(u,A) ] du \right).
	\end{eqnarray*} 
	Notice that the second term in the last expression divided by $\sqrt t$ tends to $0$ by \eqref{eq:condh3}. 
	Using integration by parts for the first term, we have
	$$
	\dint_0^{t} \Exp [ u h(u,A) ] du = t \dint_t^\infty \Exp[h(s,A)]ds+\dint_0^t\dint_u^\infty \Exp[h(s,A)]dsdu.
	$$
	The first integral on the r.h.s. divided by $\sqrt t$ tends to $0$ under \eqref{eq:condh3}. The last term divided by $\sqrt t$ also tends to $0$ by an application of the L'H\^opital rule 
	as in the non-stationary case.
	
	Finally, we observe that $\vep_{0,t}/\sqrt t \cip 0$ and the result in the stationary case is proved.
\end{proof}

	Observe that \eqref{eq:condh3} is substantially weaker than \eqref{eq:condstat} in the unmarked case. Namely the former condition only requires that the total residue due to the claims on the compact interval $[0,t]$ is of the order $o(\sqrt t)$ in probability.  In particular, in the unmarked case, the central limit theorem holds for the stationary and the non-stationary case even if \eqref{eq:condstat} is not satisfied, i.e. even when non-stationary process is not convergent. 

As we mentioned above,  there are  related limit theorems in the literature concerning only the counting process $N_t$, see \cite{zhu}, but in the contrast to their result,  our proof
does not rely on the martingale central limit theorem, it stems from  rather simple relations 
\eqref{e:Soft} and  \eqref{e:SoftStat}.

In the following example, we consider some special cases of Hawkes processes for which a closed form expression for the 
2nd moment $\Exp D^2$ can be found.

\begin{example} 
	
({Marked Hawkes processes with claims independent of the cluster size})
Assume  that the random measure \eqref{e:muA}
$$ 
\tilde{\mu}_A (B  ) = \dint_B h(s, A) ds\,,
$$
on $\mathbb{R}_+$ and
the  corresponding claim size $X=f(A)$ are independent. 
In particular, this holds if 
$\tilde{\mu}_A (B) = \int_B h(s) ds\,,$ for some integrable function $h$, i.e. when $\tilde{\mu}_A$ is a deterministic measure and we actually have standard  Hawkes process with independent marks.
In this special case $K+1$ is known to have the so--called Borel distribution, see \cite{borel}.

{
	Using the arguments from the proof of Lemma~\ref{lm:1},
one obtains
$
\mu_D = \Exp D_i = {\Exp X}/({1-\kappa}).
$
Similarly the variance of $D_i$'s is finite as the variance of a compound sum, and equals
$$
\sigma_D^2 =  \dfrac{{\sigma_X^2}}{1-\kappa}  + \dfrac{\kappa {{(\Exp X)}^2} }{(1-\kappa)^3}\,,
$$ 
cf. Lemma 2.3.4 in  \cite{mikosch}.
 Hence $\Exp D^2 $ in 
Theorem~{\ref{thm:CLTHawkes}} has the form
$$
\Exp D^2 = \sigma_D^2 + \mu_D^2  = {\frac{\sigma_X^2}{1-\kappa} + \frac{{(\Exp X)}^2}{(1-\kappa)^3}}.
$$
In the special case, when the claims are all constant, say $X=f(A)\equiv c>0$, direct calculation yields
$\Exp D = c/({1-\kappa})\,,$
with $\kappa=\Exp [\kappa_A ]$,
 and
\[
\varphi''(0) = \dfrac{\Exp \left(  -c+ \kappa_A  \varphi' (0) \right)^2}{1-\kappa} = {c^2}\dfrac{\Var \kappa_A  + 1}{(1-\kappa)^3}\,,
\]
obtaining 
\[
\Exp D^2 = \varphi''(0) ={c^2} \dfrac{\Var \kappa_A  + 1}{(1-\kappa)^3}\,,
\]
in particular, for $c=1$ we recover expression in \cite{zhu}.}

\end{example}

In the rest of this  subsection, we study marked Hawkes process in the case when $D_i$'s are regularly varying
with index $\alpha <2 $. Using the result of \cite{HuSa}, one can show that when the individual claims $X=f(A)$ are regularly varying, this property is frequently 
passed on to the random variable $D$ under appropriate moment assumptions on $\kappa_A $. {However, using the specific form of the Laplace transform for $D$ given in \eqref{e:varp}, one can show regular variation of $D$ under weaker conditions}. This is the content
of the following lemma.

\begin{lemma} \label{lem:RVHawkes}
	Assume that $\kappa<1$ and that $X=f(A)$ is
	regularly varying with index $\alpha \in (0,1)\cup (1,2)$. When $\alpha \in (1,2)$, {assume additionally that $Y=X + \kappa_A \mu_D$ is regularly varying of order $\alpha$. }
	Then the random variable $D$ 
	is regularly varying with the same index $\alpha$\,.
\end{lemma}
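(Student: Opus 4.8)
The plan is to exploit the functional equation \eqref{e:varp} for the Laplace transform $\varphi(s)=\Exp e^{-sD}$ and translate the regular variation of $X$ (and, when $\alpha\in(1,2)$, of $Y=X+\kappa_A\mu_D$) into regular variation of $D$ via a Tauberian argument. Recall that by Karamata's Tauberian theorem, for $\alpha\in(0,1)\cup(1,2)$ the statement ``$D$ is regularly varying with index $\alpha$'' is equivalent to a statement about the behaviour of $1-\varphi(s)$ (resp. $1-\varphi(s)-s\mu_D$ when $\alpha\in(1,2)$) as $s\to 0+$; more precisely, writing $\psi(s)=1-\varphi(s)$, one wants $\psi(s)-s\mu_D\sim -c\,s^\alpha\ell(1/s)$ for a slowly varying $\ell$, with the analogous simpler asymptotics $\psi(s)\sim c\,s^\alpha\ell(1/s)$ when $\alpha\in(0,1)$ (no finite mean, no first-order term). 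So the first step is to set up this Tauberian equivalence and record what precise asymptotics of $\varphi$ near $0$ we must establish.

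Next I would insert $\varphi(s)=1-\psi(s)$ into \eqref{e:varp} and expand. Writing \eqref{e:varp} as $\varphi(s)=\Exp\big[e^{-sX}e^{\kappa_A(\varphi(s)-1)}\big]=\Exp\big[e^{-sX}e^{-\kappa_A\psi(s)}\big]$, we get the identity
\[
1-\psi(s)=\Exp\big[e^{-sX-\kappa_A\psi(s)}\big],
\]
i.e.
\[
\psi(s)=\Exp\big[1-e^{-sX-\kappa_A\psi(s)}\big]=\Exp\big[1-e^{-\kappa_A\psi(s)}\big]+\Exp\big[e^{-\kappa_A\psi(s)}(1-e^{-sX})\big].
\]
In the case $\alpha\in(0,1)$: since $\Exp\kappa_A<\infty$ and $\psi(s)\to0$, the first term is $\psi(s)\Exp\kappa_A+o(\psi(s))$ (by dominated convergence, using $0\le 1-e^{-x}\le x$), and the second term is, to leading order, $\Exp[1-e^{-sX}]$ because $e^{-\kappa_A\psi(s)}\to1$; the regular variation of $X$ gives $\Exp[1-e^{-sX}]\sim \Gamma(1-\alpha)s^\alpha\ell(1/s)\cdot(\text{const})$ by Karamata Tauberian. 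Hence $\psi(s)(1-\kappa)\sim \Exp[1-e^{-sX}]$, which is regularly varying of index $\alpha$; reversing the Tauberian theorem yields regular variation of $D$ with index $\alpha$ and the asymptotic constant $1/(1-\kappa)^{?}$ — actually the fixed-point structure makes the tail constant $\Exp(K+1)=1/(1-\kappa)$ times that of $X$, consistent with \textbf{(RV1)}. In the case $\alpha\in(1,2)$: here $\mu_D=\Exp X/(1-\kappa)$ is finite (Lemma~\ref{lm:1}), so I would center, introducing $\rho(s)=\psi(s)-s\mu_D$, and show $\rho(s)$ is regularly varying of index $\alpha$. The delicate point is that the first-order terms cancel exactly: from the differentiated identity \eqref{eq:idl1} one knows $\varphi'(0)=-\mu_D$, so the linear parts match and what remains is governed by the second-order behaviour, which is driven by the regular variation of the ``effective increment'' $Y=X+\kappa_A\mu_D$ appearing already in the formula $\varphi''(0)=\Exp(X+\kappa_A\varphi'(0))^2/(1-\kappa)=\Exp Y^2/(1-\kappa)$ from the proof of Lemma~\ref{lm:1}. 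Concretely, expanding $1-e^{-sX-\kappa_A\psi(s)}$ to second order and using $\psi(s)=s\mu_D+\rho(s)$ gives, after cancellation of $O(s)$ terms, an identity of the form $\rho(s)(1-\kappa)=\Exp[1-e^{-sY}-sY\1_{?}]+o(\rho(s))+o(s^\alpha\ell(1/s))$, whose right side is regularly varying of index $\alpha$ by the Tauberian theorem applied to $Y$; then invert.

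The main obstacle I expect is the $\alpha\in(1,2)$ case: making the cancellation of first-order terms rigorous and controlling the error terms uniformly as $s\to0+$. One has to be careful that $\psi(s)=s\mu_D+o(s)$ is not by itself enough — one needs the sharper two-term expansion, and the coupling between the ``internal'' occurrence of $\psi(s)$ inside $e^{-\kappa_A\psi(s)}$ and the ``external'' $\psi(s)$ on the left creates a self-referential estimate that must be bootstrapped (first get $\rho(s)=o(s)$, then feed it back to get $\rho(s)=O(s^\alpha\ell(1/s))$, then get the exact asymptotic equivalence). Handling the moment condition on $\kappa_A$ implicitly required for the error terms — one needs at least $\Exp\kappa_A^{\,p}<\infty$ for a suitable $p$, and the hypothesis that $Y$ itself is regularly varying is precisely what lets us sidestep a more restrictive assumption like $\Exp\kappa_A^{\,\alpha+\delta}<\infty$ — is the other subtlety; the cleanest route is to phrase everything through the already-available formulas \eqref{eq:idl1}–\eqref{eq:idl2} for $\varphi'$ and $\varphi''$, differentiating \eqref{e:varp} and extracting the regularly varying remainder from the fixed-point relation rather than expanding the exponential by hand.
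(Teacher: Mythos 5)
Your route differs from the paper's: you work at the level of the Laplace transform itself, feeding $\varphi=1-\psi$ into the fixed point equation \eqref{e:varp} and invoking the integrated form of Karamata's Tauberian theorem for $1-\varphi(s)$ (resp. $s\mu_D-(1-\varphi(s))$), whereas the paper differentiates \eqref{e:varp} once (case $\alpha\in(0,1)$) or twice (case $\alpha\in(1,2)$), uses the identities \eqref{eq:idl1}--\eqref{eq:idl2}, and applies the derivative form of the Tauberian theorem (its Theorem \ref{th:eqrv}) to show $\varphi'(s)\sim\varphi_X'(s)/(1-\kappa)$, respectively $\varphi''(s)\sim\varphi_Y''(s)/(1-\kappa)$, via the estimates $I_1,I_2$. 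For $\alpha\in(0,1)$ your argument is complete and correct: the split $\psi(s)=\Exp[1-e^{-\kappa_A\psi(s)}]+\Exp[e^{-\kappa_A\psi(s)}(1-e^{-sX})]$, dominated convergence (domination by $\kappa_A$, which only needs $\EE\kappa_A=\kappa<1$), and $1-e^{-x}\le x$ give $\psi(s)(1-\kappa)\sim 1-\varphi_X(s)$, and the two Tauberian passages yield $\bar F_D(x)\sim\bar F_X(x)/(1-\kappa)$; this is if anything more elementary than the paper's derivative computation.

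The gap is in the case $\alpha\in(1,2)$, which is where the lemma's real content (and the extra hypothesis on $Y$) lies: you state the expected identity only up to an unresolved placeholder ("$\1_{?}$"), you do not verify that the first-order cancellation and the self-referential error terms actually close, and your final suggestion is essentially to fall back on \eqref{eq:idl1}--\eqref{eq:idl2}, i.e. to do what the paper does, without doing it. As written this half of the lemma is not proved. It can in fact be closed within your own framework, and without any bootstrap: set $\rho(s)=s\mu_D-(1-\varphi(s))\ge 0$ (convexity, as in the paper) and note $sX+\kappa_A\psi(s)=sY-\kappa_A\rho(s)$, so \eqref{e:varp} becomes
\begin{equation*}
 s\,\Exp Y-(1-\varphi_Y(s))=\rho(s)-\Exp\bigl[e^{-sY}\bigl(e^{\kappa_A\rho(s)}-1\bigr)\bigr],
\end{equation*}
using $\Exp Y=\Exp X+\kappa\mu_D=\mu_D$. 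Since $\rho(s)\le s\mu_D$ one has $e^{-sY}(e^{\kappa_A\rho(s)}-1)\le\kappa_A\rho(s)e^{-\kappa_A(s\mu_D-\rho(s))}\le\kappa_A\rho(s)$, so dominated convergence gives $\Exp[e^{-sY}(e^{\kappa_A\rho(s)}-1)]=\rho(s)(\kappa+o(1))$; hence $\rho(s)\sim\bigl(s\,\Exp Y-(1-\varphi_Y(s))\bigr)/(1-\kappa)$, the contraction factor $\kappa<1$ replacing your proposed bootstrap, and the Tauberian theorem for $Y$ plus its converse for $D$ give $\bar F_D(x)\sim\bar F_Y(x)/(1-\kappa)$. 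Until an argument of this kind (or the paper's $I_1,I_2$ analysis of $\varphi''$) is actually supplied, the $\alpha\in(1,2)$ case remains unproven in your proposal.
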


\begin{proof} 
	
		We will use Karamata's Tauberian Theorem, as formulated and proved in Theorem 8.1.6 of~\cite{bingham}. In particular, the equivalence between (8.1.12) and (8.1.11b) in 
		\cite{bingham} yields the following.
	\begin{thm}\label{th:eqrv}
			The nonnegative random variable $X$ is regularly varying with a noninteger tail
			index $\alpha>0$, i.e. $\bar F(x)\sim x^{-\alpha}\ell(x)$ as $x\to\infty$ if and only if 
			\[
			\varphi^{(\lceil \alpha \rceil)}(s) \sim cs^{\alpha-\lceil \alpha \rceil}\ell(1/s),\qquad s\to 0+,
			\]
			for some slowly varying function $\ell$ and a constant depending only on $\alpha$: $c=-\Gamma(\alpha+1)\Gamma(1-\alpha)/\Gamma(\alpha-\lfloor \alpha \rfloor)$.
	\end{thm}

		Consider first the case $0<\alpha<1$. By differentiating once the expression for the Laplace transform, we obtain the identity \eqref{eq:idl1}
		\[
		\varphi'(s) =\frac{-\Exp \left[ 
			e^{-sX} e^{\kappa_A  (\varphi(s) -1) }  
			X  \right]}{1-\Exp[e^{-sX} e^{\kappa_A  (\varphi(s) -1) }\kappa_A ] },\qquad s>0.
		\]
		We are interested in the behavior of this derivative as $s\to 0+$. Using the inequality $|1-e^{-x}|=1-e^{-x}\le  x$,  we have 
		\begin{align*}
		\left|\varphi'(s)-\frac{-\Exp \left[ 
			e^{-sX}   
			X  \right]}{1-\Exp[e^{-sX-\kappa_A (1-\varphi(s))}\kappa_A ] }\right|&\le  \frac{\Exp \left[  \kappa_A  (1-\varphi(s))
			e^{-sX}   
			X  \right]}{1-\Exp[e^{-sX-\kappa_A (1-\varphi(s))}\kappa_A ] }\\
		&\hspace{-2cm}\le  \frac{1-\varphi(s) }s\frac{\Exp \left[\kappa_A  
			e^{-sX}   
			sX  \right]}{1-\Exp[e^{-sX-\kappa_A (1-\varphi(s))}\kappa_A ] }.
		\end{align*}
		As $e^{-sX} sX\le e^{-1},$ we prove that $\Exp \left[\kappa_A  
		e^{-sX}   
		sX  \right]=o(1)$ as $s\to 0+$ by dominated convergence. As in the proof of Lemma \ref{lm:1} the denominator $1-\Exp[e^{-sX-\kappa_A (1-\varphi(s))}\kappa_A ]$ is controlled thanks to dominated convergence as well.
		 Moreover, using again $1-e^{-x}\le  x$ and denoting $\varphi_X(s)=\Exp[e^{-sX}]$ the Laplace transform of $X$,  we have
		\[
		0\le \varphi_X(s)-\varphi(s)\le \Exp[e^{-sX}\kappa_A (1-\varphi(s)) ]\le \kappa (1-\varphi(s))
		\]
		so that
		\[
		1-\varphi(s)\le \frac1{1-\kappa}(1-\varphi_X(s)).
		\]
		Collecting all those bounds and using the identity $\varphi'_X(s)=-\Exp[e^{-sX}X]$, we obtain
		\begin{equation}\label{e:eql}
		\left|\varphi'(s)-\frac{\varphi'_X(s)}{1-\Exp[e^{-sX} e^{\kappa_A  (\varphi(s) -1) }\kappa_A ]} \right| =o\left(\frac{1-\varphi_X(s)}s\right),\qquad s\to 0^+.\end{equation}
		The regular variation of the random variable $D$ follows now from the regular variation of the random variable $X$ by two consecutive applications of Theorem \ref{th:eqrv}. First, as $X$ is regularly varying of order $0<\alpha<1$, applying the direct part of the equivalence in Theorem \ref{th:eqrv} we obtain
		\[
		\varphi_X'(s)\sim c s^{\alpha -1 }\ell(1/s), \qquad s\to 0^+.
		\]
		Applying Karamata's theorem,  i.e. the equivalence between (8.1.9) and (8.1.11b) in 
		\cite[Theorem 8.1.6]{bingham}, we obtain
		 $(1-\varphi_X(s))/s=O(\varphi'_X(s))$ as $s\to 0+$.
		Using \eqref{e:eql} and the limiting relation 
		$$\Exp[e^{-sX} e^{\kappa_A  (\varphi(s) -1) }\kappa_A ]\to \kappa,\qquad s\to 0^+\,,
		$$ we obtain
		\[
		\varphi'(s)\sim \frac{\varphi'_X(s)}{1-\kappa}\sim \frac{c s^{\alpha -1 }\ell(1/s)}{1-\kappa},\qquad s\to 0^+.
		\]
		Finally, applying the reverse part of Theorem \ref{th:eqrv}, we obtain
		\[
		\bar F_D( x)\sim \frac{\ell(x)x^{-\alpha   }}{1-\kappa}=\frac{\bar F_X(x)}{1-\kappa},\qquad x\to \infty.
		\]	
		The case $1<\alpha<2$ can be treated similarly, under the additional assumption that $Y=X + \kappa_A  \mu_D$ is regularly varying. We will again show  that $P(D>x)\sim (1-\kappa)^{-1}P(Y>x)$ as $x\to \infty$. To prove this equivalence, recall the identity \eqref{eq:idl2}
		\[
		\varphi''(s) = 
		\frac{\Exp \left[ 
			e^{-sX} e^{\kappa_A  (\varphi(s) -1) }  
			\left(  -X + \kappa_A  \varphi' (s) \right)^2\right]}{1-\Exp[e^{-sX} e^{\kappa_A  (\varphi(s) -1) }\kappa_A ]}.
		\]
		As $\alpha>1$, we have that $\Exp[Y]<\infty$ and thus $ \Exp[X]<\infty$ and $E[D]=\mu_D=(1-\kappa)^{-1}\Exp[X]$.
		Observe that, for any $s>0$,
		\begin{align*}
		&\left|\varphi''(s)-\frac{\Exp \left[ 
			e^{-sY}   
			Y^2  \right]}{1-\Exp[e^{-sX-\kappa_A (1-\varphi(s))}\kappa_A ] }\right|\\&\hspace{1cm}=\left|\frac{\Exp\left[e^{-sX-\kappa_A  (1-\varphi(s) ) }  
			\left(  -X + \kappa_A  \varphi' (s) \right)^2\right]-\Exp \left[ 
			e^{-sY}   
			Y^2  \right]}{1-\Exp[e^{-sX-\kappa_A (1-\varphi(s))}\kappa_A ] }\right|		.	
		\end{align*}
		Let us decompose the numerator into two terms
		\begin{align*}
		&\underbrace{\Exp\left|e^{-sX-\kappa_A  (1-\varphi(s))}  \left(
			\left(  -X + \kappa_A  \varphi' (s) \right)^2-Y^2\right)\right|}_{I_1}\\
		&+\underbrace{\Exp\left|\left(
			e^{-sY}-e^{-sX-\kappa_A  (1-\varphi(s))} \right)
			Y^2  \right|}_{I_2} \,.
		\end{align*}
		Using the identity $a^2-b^2=(a-b)(a+b)$, $I_1$ is bounded by
		\begin{align*}
		I_1&\le
		(\mu_D + \varphi'(s))\Exp\left[e^{-sX-\kappa_A  (1-\varphi(s))}\kappa_A \left(2X+\kappa_A (\mu_D-\varphi'(s))\right)\right]\\
		&\le \frac{\mu_D + \varphi'(s)}s\left(\Exp\left[2\kappa_A e^{-sX}sX\right]+\Exp\left[\kappa_A e^{-\kappa_A  (1-\varphi(s))}s(\mu_D-\varphi'(s))\right]\right).
		\end{align*}
		As $e^{-sX}sX\le e^{-1}$ then $\Exp\left[2\kappa_A e^{-sX}sX\right]=o(1)$ as $s\to 0^+$ by dominated convergence. By convexity of $\varphi(s)$ we have $1-\varphi(s)\ge -\varphi'(s)s$ for any $s>0$. Thus 
		\[
		e^{-\kappa_A  (1-\varphi(s))}(-\varphi'(s)s)\le e^{-\kappa_A  (-\varphi'(s)s)}(-\varphi'(s)s)\le e^{-1}
		\]
		and the dominated convergence argument also applies to the second integrand as $-\varphi'(s)s\le 1-\varphi(s)=o(1)$. We obtain $I_1=o((\mu_D + \varphi'(s))/s)$ as $s\to 0^+$. In order to control the rate of $(\mu_D + \varphi'(s))/s$, we notice that $\varphi(s)$ is $\mu_D$ Lipschitz on $s\ge0$ so that $|1-\varphi(s)|=1-\varphi(s)\le \mu_D s$. Then
		\[
		sX+\kappa_A  (1-\varphi(s))\le sX+s\kappa_A \mu_D =sY
		\]
		and we bound
		\begin{align*}
		\varphi'(s)&\le \frac{-\Exp\left[e^{-sY}X\right]}{1-\Exp[e^{-sY }\kappa_A ]}\\
		&\le \frac{\varphi_Y'(s)}{1-\Exp[e^{-sY }\kappa_A ]}+\frac{\kappa\mu_D}{1-\Exp[e^{-sY }\kappa_A ]}
		\end{align*}
		where $\varphi_Y(s)=\Exp[e^{-sY}]$ denotes the Laplace transform of $Y$. It yields to the estimates $\mu_D + \varphi'(s)=O(\mu_D + \varphi_Y'(s))+ O(\kappa-\Exp[e^{-sY }\kappa_A ])$. Using again that $1-e^{-x}\le x$ on the second term we obtain that $I_1=o((\mu_D + \varphi_Y'(s))/s)+o(1)$ as $s\to 0^+$.

		We now turn to the term $I_2$  that we identify as
		\begin{align*}
		I_2&= \Exp \left|
		\left(e^{sX+\kappa_A  (1-\varphi(s))-sY}-1\right) e^{-sX-\kappa_A  (1-\varphi(s))}
		Y^2  \right|\\
		& = \Exp \left|
		\left(e^{-\kappa_A (\mu_D s- (1-\varphi(s)))}-1\right) e^{-sX-\kappa_A  (1-\varphi(s))}
		Y^2  \right| .
		\end{align*}
		As $1-\varphi(s)\le \mu_D s$ the term in the absolute value is negative for $s>0$ and
		\[
		I_2= \Exp \left[
		\left(1-e^{-\kappa_A (\mu_D s- (1-\varphi(s)))}\right) e^{-sX-\kappa_A  (1-\varphi(s))}
		Y^2  \right] .
		\]
		Using again the basic inequality $1-e^{-x}\le  x$ for $x\ge 0 $ we obtain the new estimate
		\begin{align*} 
		I_2&\le   \Exp \left[
		\kappa_A  (s\mu_D-(1-\varphi(s))
		e^{-sX-\kappa_A  (1-\varphi(s))}Y^2  \right]\\
		&\le  \frac{s\mu_D-(1-\varphi(s))}{s^2}\Exp \left[
		\kappa_A  
		e^{-sX-\kappa_A  (1-\varphi(s))}(sY)^2  \right].
		\end{align*}
		We have
		\[
		(sY)^2\le (sX +\kappa_A  (1-\varphi(s)))^2 + \kappa_A^2(s\mu_D-(1-\varphi(s)))^2\,.
		\]
		As $e^{-x} x^2\le 4e^{-2}$ for any $x>0$, we prove that
		\[
		\Exp \left[\kappa_A  
		e^{-(sX+\kappa_A  (1-\varphi(s))}(sX+\kappa_A  (1-\varphi(s)))^2\right]=o(1)
		\]
		as $s\to 0^+$ by dominated convergence. It remains to bound the term 
		\[
		e^{-sX-\kappa_A  (1-\varphi(s))} \kappa_A^2(s\mu_D-(1-\varphi(s)))^2
		\]
		uniformly for $s>0$ sufficiently small. As $1-\varphi(s)\sim s\mu_D$ as $s\to 0^+$, we have $0\le s\mu_D-(1-\varphi(s))\le 1-\varphi(s)$ for $s$ sufficiently small. Then we obtain 
		\[
		e^{-sX-\kappa_A  (1-\varphi(s))} \kappa_A^2(s\mu_D-(1-\varphi(s)))^2\le e^{-\kappa_A  (1-\varphi(s))}\kappa_A^2(1-\varphi(s))^2=o(1)
		\]
		where the negligibility follows from dominated convergence and the basic inequality $e^{-x} x^2\le 4e^{-2}$ for any $x>0$.
		We obtain
		\[
		I_2=o\left(\frac{s\mu_D-(1-\varphi(s))}{s^2}\right),\qquad s\to 0^+.
		\]
		Similar computation than above yields
		\begin{multline*}
		0\le \varphi(s)-\varphi_Y(s)\le \Exp \left[
		\kappa_A  (s\mu_D-(1-\varphi(s))
		e^{-sX+\kappa_A (\varphi(s)-1))}\right]\\\le \kappa(s\mu_D-(1-\varphi(s)).
	\end{multline*} 
Thus as	 $(s\mu_D-(1-\varphi(s))\le (s\mu_D-(1-\varphi_Y(s))/(1-\kappa)$ and 
 $\Exp[Y]=\Exp[X]+\kappa \mu_D=\mu_D$ we conclude that
		\begin{multline*}
		\left|\varphi''(s)-\frac{\varphi_Y''(s)}{1-\Exp[e^{-sX-\kappa_A (1-\varphi(s))}\kappa_A ] }\right|\\
		=o\left(1+\frac{\Exp[Y] + \varphi_Y'(s)}s+\frac{s\Exp[Y]-(1-\varphi_Y(s))}{s^2}\right),
		\end{multline*}
		as $s\to 0^+$.
		Let us first apply Theorem \ref{th:eqrv} on $Y$ so that $\varphi''_Y(s)$ is $\alpha-2$ regularly varying around $0$. Applying Karamata's theorem again, i.e the equivalences between  (8.1.11b) and (8.1.9), (8.1.11b)  and (8.1.10) in 
		\cite[Theorem 8.1.6]{bingham}  assert respectively that $(s\Exp[Y]-(1-\varphi_Y(s)))/s^2=O(\varphi_Y''(s))$ and $(\Exp[Y] + \varphi_Y'(s))/s=O(\varphi_Y''(s))$ as $s\to 0^+$. We then obtain
		\[
		\varphi''(s)\sim \frac{\varphi''_Y(s)}{1-\kappa}\sim \frac{c s^{\alpha -2 }\ell(1/s)}{1-\kappa},\qquad s\to 0+,
		\]
and finally $\bar F(x) \sim \bar F_Y(x)/(1-\kappa)$, $x\to \infty$, by applying the reverse part of Theorem \ref{th:eqrv}.
\end{proof} 
{We are now ready to characterize the asymptotic behavior of $S(t)$ in the regularly varying case.}

\begin{thm} \label{thm:RVHawkes}
	Assume that the assumptions of Lemma~\ref{lem:RVHawkes} hold.\\
		i) If $\alpha \in (0,1)$ and there exists $\delta>0$ such that 
		\begin{equation}\label{e:uhu}
		t ^{ \delta  }\EE [\tilde{\mu}_A(t,\infty)] \to 0.
		\end{equation}	
	as $t \toi$,  then there exists a sequence $(a_n),\, a_n\toi ,$ and an $\alpha$--stable random variable $G_\alpha$ such that
	 	$$
	 	\frac{S(t) }{ 
	 		a_{\lfloor \nu t \rfloor} }
	 	\cid G_\alpha\,.
	 	$$
		ii) If $\alpha \in (1,2)$ and

	\begin{equation}\label{e:vepdelt}
	t ^{1+\delta - 1/\alpha} 
   \EE [\tilde{\mu}_A(t,\infty)]    \to 0\,,
	\end{equation}	
	as $t \toi$  holds for some  $\delta >0$, then there exists a sequence  $(a_n),\, a_n\toi ,$ and an $\alpha$--stable random variable $G_\alpha$ such that
		$$
		\frac{S(t) - t \nu \mu_D }{ 
			a_{\lfloor \nu t \rfloor} }
		\cid G_\alpha\,.
		$$
\end{thm}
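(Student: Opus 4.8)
The plan is to deduce both statements from the abstract stable limit theorems of Section~\ref{sec:IVSL}: Proposition~\ref{prop:12} for $\alpha\in(1,2)$ and Proposition~\ref{prop:01} for $\alpha\in(0,1)$, applied to the marked Hawkes process regarded as a marked Poisson cluster model with immigrant intensity $\nu$ and i.i.d.\ cluster sums $(D_i)$ (the standing assumptions of Section~\ref{sec:GMEM} hold since $\Exp K_i=\kappa/(1-\kappa)<\infty$). To invoke those results I need two things: that $D$ be regularly varying with index $\alpha$, and that the residue satisfy $\vep_t=o_P(a_t)$, where $(a_n)$, $a_n\to\infty$, is chosen so that $n\PP(D>a_n)\to1$. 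The first is supplied verbatim by Lemma~\ref{lem:RVHawkes}, which moreover gives $\bar F_D\sim(1-\kappa)^{-1}\bar F_X$ when $\alpha\in(0,1)$ and $\bar F_D\sim(1-\kappa)^{-1}\bar F_Y$ when $\alpha\in(1,2)$; hence such a sequence $(a_n)$ exists and is regularly varying of index $1/\alpha$, and with the convention $a_t=a_{\lfloor t\rfloor}$ of Section~\ref{sec:IVSL} one has $a_{\nu t}=a_{\lfloor\nu t\rfloor}$. Once the residue estimate is in hand, Propositions~\ref{prop:12} and~\ref{prop:01} produce exactly the two displayed convergences.

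The remaining work is the estimate $\vep_t=o_P(a_t)$, and here I would reuse the mean residue bound obtained in the proof of Theorem~\ref{thm:CLTHawkes}: dominating the internal intensity $\lambda$ of $N$ by the stationary intensity $\lambda^*$, for which $\Exp[\lambda^*(s)]\equiv\nu/(1-\kappa)$, and applying the compensation (projection) theorem gives
\begin{equation*}
\Exp[\vep_t]\le\frac{\nu\,\mu_D}{1-\kappa}\dint_0^t\Exp[\tilde{\mu}_A((x,\infty))]\, dx,
\end{equation*}
while the same computation with the factor $D_{\tau_j}$ deleted bounds the number $M_t=\#\{(i,j):\tau_i\le t<\tau_j,\ \tau_i\to\tau_j\}$ of residual direct-offspring arrivals by $\Exp[M_t]\le\frac{\nu}{1-\kappa}\int_0^t\Exp[\tilde{\mu}_A((x,\infty))]\, dx$. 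Since $\kappa_A=\int_0^\infty h(s,A)\, ds$ with $\Exp\kappa_A=\kappa<\infty$, one has $\Exp[\tilde{\mu}_A((x,\infty))]\to0$ as $x\to\infty$. Reading things through the associated mixed binomial cluster ($K=L_A$, for which $\tilde{\mu}_A((x,\infty))=m_A\,\PP(W>x\mid A)$ a.s.\ in the notation of Section~\ref{sec:mixbin}), condition~\eqref{e:vepdelt} is precisely condition~\eqref{eq:condstable1mix}, and~\eqref{e:uhu} is (more than) what is used in the proof of Corollary~\ref{cor:stable2mix}.

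For part ii), $\alpha\in(1,2)$, I would note $\Exp D<\infty$, apply Markov's inequality $\PP(\vep_t>a_t)\le\Exp[\vep_t]/a_t$, and invoke the L'H\^opital computation in the proof of Corollary~\ref{cor:stable1mix} (literally the same integrand) to conclude that the right-hand side tends to $0$ under~\eqref{e:vepdelt}, using that $(a_t)$ is regularly varying of index $1/\alpha$ and $t^{1/\alpha-\delta}=o(a_t)$ for every $\delta>0$. Then $\vep_t=o_P(a_t)$ and Proposition~\ref{prop:12} closes this part.

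For part i), $\alpha\in(0,1)$, Markov on $\vep_t$ is unavailable since $\Exp D=\infty$, so I would follow the proof of Corollary~\ref{cor:stable2mix}: by the reindexing in the proof of Theorem~\ref{thm:CLTHawkes}, $\vep_t\eind\sum_{m=1}^{M_t}D^{(m)}$, with the $D^{(m)}$ i.i.d.\ copies of $D$ (the totals of the fresh, pairwise disjoint cascades rooted at the $M_t$ residual direct offspring) and independent of $M_t$. Because $D$ is regularly varying of index $\alpha\in(0,1)$ and $M_t$ is independent of the array, $\sum_{m=1}^{M_t}D^{(m)}/a_{M_t}$ is tight, so from
\begin{equation*}
\frac{\vep_t}{a_t}\eind\frac{\sum_{m=1}^{M_t}D^{(m)}}{a_{M_t}}\cdot\frac{a_{M_t}}{a_t}
\end{equation*}
and the regular variation of $(a_n)$ with positive index it suffices to show $M_t=o_P(t)$; by Markov this reduces to $\Exp[M_t]/t\to0$, which follows from the displayed bound by a Ces\`aro argument (in particular under~\eqref{e:uhu}, since $\Exp[\tilde{\mu}_A((x,\infty))]\to0$). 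Hence $\vep_t=o_P(a_t)$ and Proposition~\ref{prop:01} finishes the proof. I expect this last case to be the main obstacle: the mean residue bound by itself no longer controls $\vep_t$, and one must identify $\vep_t$ as a sum of i.i.d.\ regularly varying cluster sums subordinated to an independent, asymptotically sublinear counting variable $M_t$, which is exactly the place where the domination of $\lambda$ by the stationary Hawkes intensity $\lambda^*$ is genuinely used.
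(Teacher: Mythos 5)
Your proof is correct. For part ii) ($\alpha\in(1,2)$) you do exactly what the paper does: reuse the mean bound \eqref{eq:domH} from the proof of Theorem~\ref{thm:CLTHawkes} (domination of $\lambda$ by $\lambda^*$ plus the projection theorem), apply Markov and L'H\^opital under \eqref{e:vepdelt}, and invoke Proposition~\ref{prop:12}. For part i) ($\alpha\in(0,1)$), however, you take a genuinely different route from the paper. The paper controls $\vep_t$ by a fractional-moment Markov inequality of order $\gamma<\alpha$ (using $\Exp[D^\gamma]<\infty$), sub-additivity of $x\mapsto x^\gamma$, the same stationary domination, and a Ces\`aro argument, and it is precisely the lossiness of this $\gamma$-moment bound that forces the hypothesis \eqref{e:uhu} (with $\gamma$ taken close to $\alpha$). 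You instead transplant the argument of Corollary~\ref{cor:stable2mix}: decompose $\vep_t$ over the ``first-crossing'' direct offspring, i.e.\ $\vep_t=\sum_{\tau_i\le t}\sum_{\tau_j>t}D_{\tau_j}\1_{\tau_i\to\tau_j}\eind\sum_{m=1}^{M_t}D^{(m)}$ with the sub-cascade totals $D^{(m)}$ i.i.d.\ copies of $D$ independent of $M_t$, use tightness of $\sum_{m\le M_t}D^{(m)}/a_{M_t}$ and regular variation of $(a_t)$, and reduce everything to $\Exp[M_t]=o(t)$, which follows from the same projection bound and Ces\`aro since $\Exp[\tilde\mu_A((x,\infty))]\to 0$ (dominated convergence, $\Exp\kappa_A<\infty$). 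This buys something: your argument does not actually use \eqref{e:uhu} at all in case i), so it proves a slightly stronger statement, and it avoids fractional moments; the paper's route, on the other hand, needs no claim about the joint law of $\vep_t$ beyond expectations of $D^\gamma$. The one step in your version that deserves to be spelled out is the asserted independence of $M_t$ from the array $(D^{(m)})$ and the i.i.d.\ property of the $D^{(m)}$: this holds because $M_t$ is measurable with respect to the events up to time $t$ (times and marks) together with the offspring counts crossing $t$, while the crossing events' marks are fresh i.i.d.\ draws from $Q$ and root disjoint, independently generated cascades --- the same conditioning the paper performs (on $(\tau_i,A^i)$, $\tau_i\le t$) in the proofs of Theorems~\ref{thm:CLTHawkes} and~\ref{thm:RVHawkes}, so it is at the paper's level of rigor, but it is the load-bearing point of your decomposition and should be stated explicitly.
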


\begin{proof}
	The proof is based on  the representation
	\eqref{e:Soft},
	and  application of Propositions~\ref{prop:12} and \ref{prop:01}. In either case, it remains to show that
	$$
	 \vep_t = o_P(a_t)\,.
	$$
	Consider first the case $\alpha \in (1,2)$. Since then 
	$\mu_D = \Exp D<\infty$, the argument in the proof of Theorem~\ref{thm:CLTHawkes} still yields the bound
	\eqref{eq:domH} on $\Exp \vep_t$. Using L'H\^opital's rule again together with condition 
	\eqref{e:vepdelt}, shows that
	$\Exp \vep_t = o (t^{1/\alpha-\delta})$,
	where we assume without loss of generality that $\delta<1/\alpha$. Since, $a_t = t^{1/\alpha} \ell(t)$
	for some slowly varying function $\ell$, it follows that
	$\vep_t /a_t\cip 0$ as $t\toi$.

	For $\alpha \in (0,1)$, random variable $D$ has no finite mean. In order to prove that $
	 \vep_t = o_P(a_t)
	$ we use the Markov inequality of order $0<\gamma<\alpha$ as $\Exp [D^\gamma]<\infty$. We will show that under assumption \eqref{e:uhu}
	\[
	\Exp [\vep_t^\gamma]=o(a_t^\gamma)\,,\qquad t\to \infty.
	\]
By sub-linearity of $x\to x^\gamma$, $\gamma\le1$, we have 
\begin{align*}
\Exp[\vep_t^\gamma] &=  \Exp\Big[\Big( \dsum_{\taui \leq t}\dsum_j \1_{\taui + T_{ij} >t}  X_{ij}\Big)^\gamma\Big]\\
   &= \Exp\Big[\Big(\dsum_{\tau_i \leq t} \dsum_{\tau_j > t}  D_{\tau_j}  \1_{\tau_i\to \tau_j}\Big)^\gamma\Big]\\
      &\leq \Exp\Big[ \dsum_{\tau_i \leq t} \dsum_{\tau_j > t}  D_{\tau_j}^\gamma  \1_{\tau_i\to \tau_j} \Big]\\
   &=\Exp\Big[\dsum_{\tau_i \leq t} \EE\Big[\dsum_{k=1}^{L^{i}} D_{\tau_i+W_{ik}}^\gamma  \1_{\tau_i+W_{ik} > t}  \mid (\tau_i,A^i)_{i\ge 0}; \tau_i \le t\Big]\Big]\\
   &= \Exp[D^\gamma] 
   \Exp  \left[ \dint_{0}^t \dint_{\mathbb{S}} \tilde{\mu}_a((t-s,\infty)) N(ds,da)  \right]\,,
\end{align*}
	We can again compare the marked Hawkes process $N$ with a stationary version of it,  $N^*$ say. By the same arguments as in the proof of Theorem \ref{thm:CLTHawkes}, we obtain
		\begin{eqnarray*}
  \Exp \left[ \dsum_{0\le \tau_i \leq t} \dsum_{t< \tau_j}    \1_{\tau_i\to \tau_j} \right]\le  \dfrac{\nu}{1-\kappa}\left(\dint_0^t\dint_u^\infty \Exp[h(s,A)]dsdu\right).
		\end{eqnarray*}
By regular variation of order $1/\alpha$ of $(a_t)$ we have $t^{\gamma/\alpha-\delta'}=o(a_t^\gamma)$ for any $\delta'>0$. Once again, we use a Cesar\'o argument to prove that $\Exp [\vep_t^\gamma]=o(a_t^\gamma)$ under the condition
\[
t ^{1+\delta' - \gamma/\alpha}\EE [\tilde{\mu}_A(t,\infty)] \to 0\,, \qquad t\to \infty\,.
\]
As $\gamma$ can be taken as close as possible to $\alpha$, the result holds under  assumption \eqref{e:uhu}.
\end{proof}

\begin{remark}
Theorem \ref{thm:RVHawkes} i) and ii) also hold on the stationary version following the same arguments as in the proof of Theorem \ref{thm:CLTHawkes}.
\end{remark}

\section*{Acknowledgements}  The work of Bojan Basrak has been supported in part by Croatian
Science Foundation under the project 3526. {The work of Bojan Basrak and Olivier Wintenberger has been supported in part by the AMERISKA network, project ANR-14-CE20-0006.}   The authors thank Philippe Soulier for pointing out the reference \cite{HuSa}.

\bibliographystyle{plainnat}

\end{document}